\documentclass{article}
\usepackage[utf8]{inputenc}
\usepackage{amsmath,amsthm}
\usepackage{amsfonts}
\usepackage{graphicx}
\usepackage{authblk}
\usepackage{biblatex}

\addbibresource{mybibliography.bib}
\graphicspath{ {./images/} }

\newtheorem{theorem}{Theorem}[section]
\newtheorem{lemma}{Lemma}[section]
\newtheorem{corollary}{Corollary}[section]
\newtheorem{proposition}{Proposition}[section]

\theoremstyle{definition}
\newtheorem{definition}{Definition}[section]
\newtheorem{example}{Example}[section]
\newtheorem{remark}{Remark}[section]

\title{A discrete analog of Segre's theorem on spherical curves}
\author{Samuel Pacitti Gentil, Marcos Craizer}
\affil{\begin{small} Pontifícia Universidade Católica, Rio de Janeiro

sampacitti@gmail.com, craizer@puc-rio.br \end{small}}

\date{}

\begin{document}

\maketitle

\begin{abstract}
We prove a discrete analog of a certain four-vertex theorem for space curves. The smooth case goes back to the work of Beniamino Segre and states that a closed and smooth curve whose tangent indicatrix has no self-intersections admits at least four points at which its
torsion vanishes. Our approach uses the notion of discrete tangent indicatrix of a (closed) polygon. Our theorem then states that a polygon with at least four vertices and whose discrete tangent indicatrix has no self-intersections admits at least four flattenings, i.e., triples of vertices such that the preceding and following vertices are on the same side of the plane spanned by this triple.
\end{abstract}

\section{Introduction}

The Four-Vertex Theorem is a remarkable result in plane geometry. It states that for a smooth plane convex curve there are at least four points of the curve at which its curvature attains a maximum or minimum. Such points are called the \textbf{vertices} of the curve. In order to generalize this theorem for three-dimensional curves, it is necessary to reformulate not only the notion of convexity for such curves, but also the notion of a vertex.

One of the ways of capturing the notion of convexity of a plane curve is to look at the tangent vector at each point: a plane curve is convex if and only if, for each pair of distinct points of the curve, their respective tangent vectors do not point to the same direction. For space curves that satisfy this property, Segre proved that there are at least four points at which the torsion is zero (see \cite{Segre}). His theorem is actually a consequence of a theorem on the tangent indicatrix of the original curve: if a smooth spherical curve intersects every great circle (i.e., is not contained in any open hemisphere) and does not have self-intersections, then it has at least four spherical inflection points.

An alternative strategy to deal with this type of theorem is to consider the discrete case: instead of using smooth curves, the object of study consists of polygons. This approach simplifies considerably the problem, enables us to use induction on the number of the vertices, and makes it possible to use tools from combinatorics. The downside of this strategy is the ambiguity of the process of discretizing: there might be more than a way of doing so. Consequently, there might be discrete versions of theorems from the smooth case which are not equivalent to each other (i.e., a point of a polygon might or not be "a point of torsion zero" depending on the notion of "torsion zero" being used).

In the present text we will focus on the discrete case. Although a proof for the discrete case has already been found by Panina (see \cite{Panina}), her proof uses the original theorem by Segre (i.e., the smooth case) in order to prove the discrete analog. Our proof, on the other hand, relies solely on combinatorial arguments. For a generic polygon, we explain what it means for a triple of vertices to be a flattening/inflection and we define its discrete tangent indicatrix as a certain spherical polygon. Our Theorem then states that if this spherical indicatrix is not contained in any closed hemisphere and does not have self-intersections, then the original space polygon will admit at least 4 flattenings. The general strategy to prove this theorem is to use induction on the number of vertices of the spherical polygon. The most difficult and subtlest point of the induction step is to prove that there is at least one point that can be deleted from the spherical polygon so that the resulting spherical polygon still will neither be contained in any closed hemisphere nor will have self-intersections. In order to prove this fact, we also obtain some interesting results regarding spherical polygons in general using basic tools of Convex Geometry. At the end of the article we present two applications of the Main Theorem: a discrete version of the so called Tennis Ball Theorem and a discrete version of a theorem by Möbius.

\section{Basic Definitions}

We begin this section with two definitions:
\begin{definition}
\label{segre_curve}
A closed curve $\gamma: \mathbb{S}^1 \rightarrow \mathbb{R}^3$ is called a \textbf{Segre curve} if it has non-vanishing curvature and if, for any $t_1 \neq t_2 \in \mathbb{S}^1$, the tangent vectors $\gamma'(t_1)$ and $\gamma'(t_2)$ do not point to the same direction.
\end{definition}

\begin{definition}
A \textbf{flattening} or a \textbf{inflection point} is a point $p=\gamma(t_0)$ of a curve, for some $t_0 \in \mathbb{S}^1$, at which $\tau(t_0)=0$.
\end{definition}

\begin{theorem}
\label{segre_theorem}
\textbf{(Segre)} Any Segre curve has at least 4 flattenings.
\end{theorem}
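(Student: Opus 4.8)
The plan is to deduce Segre's theorem as a corollary of the theorem on the tangent indicatrix that was mentioned in the introduction. The key observation is the dictionary between properties of a space curve $\gamma$ and properties of its tangent indicatrix $T = \gamma'/\|\gamma'\|$, viewed as a curve on the unit sphere $\mathbb{S}^2$. First I would set up this translation carefully. Since $\gamma$ is a Segre curve, its curvature never vanishes, so the tangent indicatrix $T(t)$ is a well-defined, regular spherical curve. The Segre condition that no two distinct tangent vectors point in the same direction is exactly the statement that $T$ has no self-intersections as a map into $\mathbb{S}^2$.

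Next I would translate the two remaining hypotheses and the conclusion into spherical language. The crucial computation is that the geodesic curvature of the tangent indicatrix encodes the torsion of the original curve: a short differentiation using the Frenet equations $\gamma'' = \kappa \mathbf{n}$, $\mathbf{n}' = -\kappa \mathbf{t} + \tau \mathbf{b}$ shows that $T' = \kappa \mathbf{n}$ and that the spherical (geodesic) curvature of $T$ is proportional to $\tau/\kappa$. Hence the points where $\tau(t_0) = 0$ correspond precisely to the spherical inflection points of $T$, i.e. the points where $T$ crosses a great circle tangentially. This gives the correspondence flattenings of $\gamma \longleftrightarrow$ spherical inflections of $T$. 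I would then verify that the tangent indicatrix of any closed curve meets every great circle: this is a standard fact, following from the fact that $\int_{\mathbb{S}^1} T(t)\,dt = 0$ (since $\gamma$ is closed, $\oint \gamma'\,dt = 0$), so $T$ cannot lie strictly inside any open hemisphere.

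With the dictionary in place, the proof becomes a direct application of the spherical theorem quoted in the introduction: a smooth spherical curve that meets every great circle and has no self-intersections possesses at least four spherical inflection points. Feeding in our three verified facts — $T$ is regular and simple, $T$ meets every great circle, and spherical inflections of $T$ are exactly the flattenings of $\gamma$ — yields at least four flattenings of $\gamma$, which is the desired conclusion.

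I expect the main obstacle to be the careful verification of the curvature correspondence, namely establishing that the zeros of $\tau$ match the spherical inflection points of $T$ rather than some shifted or degenerate set. One must be attentive to the reparametrization by arc length on the sphere and to the precise sign and normalization in the formula relating geodesic curvature of $T$ to $\tau/\kappa$, so that the notion of ``spherical inflection'' used in the cited spherical theorem agrees exactly with ``$\tau = 0$'' here; the non-vanishing of $\kappa$ is what keeps this correspondence clean and prevents spurious degeneracies. The remaining steps are essentially bookkeeping once this identification is secured.
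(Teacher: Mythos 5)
Your proposal is correct and follows exactly the route the paper itself indicates: the paper states this theorem without proof, citing Segre, and explains in the introduction that it is a consequence of the theorem on the tangent indicatrix (a simple spherical curve meeting every great circle has at least four spherical inflections), which is precisely the reduction you carry out. Your dictionary — $\kappa \neq 0$ gives regularity of $T$, the Segre condition gives simplicity, $\oint \gamma'\,ds = 0$ rules out containment in an open hemisphere, and the geodesic curvature of $T$ being $\tau/\kappa$ identifies flattenings with spherical inflections — is the standard and correct way to make that reduction precise.
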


Now, let $P = [v_1,v_2,...,v_n]$ be a \textbf{polygon}, i.e., a closed polygonal line (where we consider the indices $i$ modulo $n$). We say that $P$ is \textbf{generic} if it does not have 4 of its vertices on the same plane. A naive approach to discretizing Definition \ref{segre_curve} would be as follows: a ``Segre polygon" should not have directed edges pointing to the same direction. Notice, however, that any generic polygon satisfies this condition: if there were $e_i = \overrightarrow{v_i v_{i+1}}$ and $e_j = \overrightarrow{v_j v_{j+1}}$ with $e_i \parallel e_j$, then the vertices $v_i,v_{i+1},v_j$ and $v_{j+1}$ would be in the same plane, contradicting the genericity of the polygon.

Before we present a better approach for this problem, we need to introduce the following definition:

\begin{definition}
Given a smooth curve $\gamma$ in $\mathbb{R}^3$ with non-vanishing curvature, translate the unit tangent vector at each point of the curve to a fixed point \textbf{0}. The endpoints of the translated vectors describe then a curve on the unit sphere $\mathbb{S}^2$. We call this curve the \textbf{tangent indicatrix} of $\gamma$.
\end{definition}

Therefore, a Segre curve can be reformulated as a closed curve such that its tangent indicatrix is embedded in $\mathbb{S}^2$ (i.e., smooth and without self-intersections). Moreover, Theorem \ref{segre_theorem} now reads:

\begin{theorem}
Let $\gamma$ be a closed curve in $\mathbb{R}^3$. If its tangent indicatrix is embedded in $\mathbb{S}^2$, then $\gamma$ has at least 4 flattenings.
\end{theorem}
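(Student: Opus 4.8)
The plan is to connect the tangent indicatrix to the geometry of flattenings and then invoke the spherical version of Segre's theorem that precedes it. The key observation is that the tangent indicatrix $T$ records the directions of the curve, so its own geometric degeneracies correspond to geometric features of $\gamma$. Specifically, I would first show that a point $t_0$ is a \emph{flattening} of $\gamma$, i.e. $\tau(t_0)=0$, precisely when $T$ has a \emph{spherical inflection} at the corresponding point, that is, when the geodesic curvature of $T$ vanishes there. The reason is that the unit tangent of $\gamma$ is $T(t)$ itself, so differentiating $T=\gamma'/|\gamma'|$ relates the velocity and acceleration of $T$ on the sphere to the Frenet frame of $\gamma$; a direct computation using the Frenet equations $t'=\kappa n$, $n'=-\kappa t+\tau b$ shows that the geodesic curvature of $T$ is proportional to $\tau/\kappa$. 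Hence zeros of $\tau$ match spherical inflection points of $T$ one-to-one (using the non-vanishing of $\kappa$, which holds since $T$ is a genuine regular spherical curve).

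Second, I would observe that the hypothesis ``$T$ is embedded in $\mathbb{S}^2$'' supplies exactly the two conditions required by the spherical formulation of Segre's theorem stated just above (Theorem on spherical curves): $T$ has no self-intersections by assumption, and I must argue that $T$ meets every great circle, equivalently that $T$ is not contained in any open hemisphere. This is the step I expect to be the main obstacle, since embeddedness alone does not obviously force the indicatrix to be non-hemispherical. The standard argument is that for a \emph{closed} curve $\gamma$ the tangent indicatrix has a vanishing total signed enclosed area balance, or more directly that $\int_0^L T(s)\,ds = \gamma(L)-\gamma(0)=0$ because $\gamma$ is closed; this mean-zero condition forces $T$ to not lie strictly on one side of any plane through the origin, hence $T$ cannot be contained in an open hemisphere.

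Concretely, suppose for contradiction that $T$ lay in the open hemisphere $\{x : \langle x, u\rangle > 0\}$ for some unit vector $u$. Then I would compute
\begin{equation}
\langle \gamma(L)-\gamma(0), u\rangle = \int_0^L \langle \gamma'(s), u\rangle\, ds = \int_0^L |\gamma'(s)|\,\langle T(s), u\rangle\, ds > 0,
\end{equation}
since the integrand is strictly positive. But the left-hand side is zero because $\gamma$ is closed, a contradiction. Therefore $T$ meets every closed hemisphere's bounding great circle, and in particular is not contained in any open hemisphere.

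Finally, with both hypotheses of the spherical Segre theorem verified, I conclude that $T$ has at least four spherical inflection points. Transporting this back through the correspondence established in the first step, these four spherical inflections of $T$ are exactly four points where $\tau$ vanishes on $\gamma$, i.e. at least four flattenings, which is the desired conclusion. The only genuinely delicate point beyond the main obstacle is ensuring the correspondence between spherical inflections and torsion zeros is a clean bijection rather than merely an inequality; I would handle this by noting that the geodesic curvature of $T$ and $\tau$ share the same sign and the same zero set (again using $\kappa>0$), so counting is preserved in both directions.
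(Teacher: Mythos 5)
Your argument is correct, but it is doing genuinely more work than the paper does for this statement. In the paper this theorem carries no proof at all: it is presented as a pure restatement of the preceding Theorem~2.1 (``Any Segre curve has at least 4 flattenings'', quoted from Segre's paper), obtained by unwinding definitions --- a closed curve with non-vanishing curvature whose tangent vectors at distinct parameters never point in the same direction is exactly a closed curve whose tangent indicatrix is a regular spherical curve without self-intersections. What you do instead is carry out the reduction that the paper only alludes to in its introduction, namely deriving the space-curve statement from the spherical inflection theorem: your Frenet computation giving geodesic curvature of $T$ equal to $\tau/\kappa$ (so that spherical inflections of $T$ biject with torsion zeros of $\gamma$, using $\kappa>0$), and your closedness argument $\int_0^L |\gamma'(s)|\langle T(s),u\rangle\,ds = \langle\gamma(L)-\gamma(0),u\rangle = 0$ forcing $T$ to meet every great circle, are both correct and are exactly the two lemmas needed for that reduction. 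Two small caveats: the spherical theorem you invoke is stated in the paper only informally in the introduction, not as a numbered theorem you can cite, so as written your proof trades one unproved deep input (Segre's space-curve theorem) for another (his spherical one); and your integral argument in fact gives slightly more than you claim --- if $\langle T,u\rangle\ge 0$ everywhere with zero integral then $T$ lies in the great circle $u^{\perp}$ and $\gamma$ is planar, so outside that degenerate case $T$ is not contained in any \emph{closed} hemisphere either, which is the hypothesis the paper's discrete analog actually uses. Your route buys an honest explanation of why the two formulations are connected; the paper's route buys brevity at the cost of proving nothing.
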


\begin{definition}
Given a polygon $P = [v_1,v_2,...,v_n]$ in $\mathbb{R}^3$, denote by $u_i$ the unit tangent vector with the same direction of the edge $e_i$, i.e.,
$$ u_i = \frac{e_i}{|e_i|} = \frac{\overrightarrow{v_i v_{i+1}}}{|\overrightarrow{v_i v_{i+1}}|} = \frac{v_{i+1}-v_i}{|v_{i+1}-v_i|}.$$
We define the \textbf{(discrete) tangent indicatrix} of $P$ as the \textit{closed spherical polygonal line}, i.e., the \textit{spherical polygon}
$$ Q = [u_1,u_2,...,u_n],$$
whose edges are the spherical segments (with minimal length) joining $u_i$ and $u_{i+1}$. This definition goes back to the work of Banchoff (see \cite{Banchoff}).
\end{definition}

We can finally define the discrete counterpart of Definition \ref{segre_curve}:

\begin{definition}
A polygon $P$ is a \textbf{Segre polygon} if its tangent indicatrix $Q$ does not have self-intersections.
\end{definition}

\begin{definition}
A \textbf{flattening} of a polygon is a triple $\{v_i,v_{i+1},v_{i+2}\}$ such that $v_{i-1}$ and $v_{i+3}$ are on the same side of the plane generated by vertices $v_i$, $v_{i+1}$ and $v_{i+1}$ (see figure \ref{figure_flattening_non_flattening}).
\end{definition}

\begin{figure}
    \centering
    \includegraphics[scale=0.3]{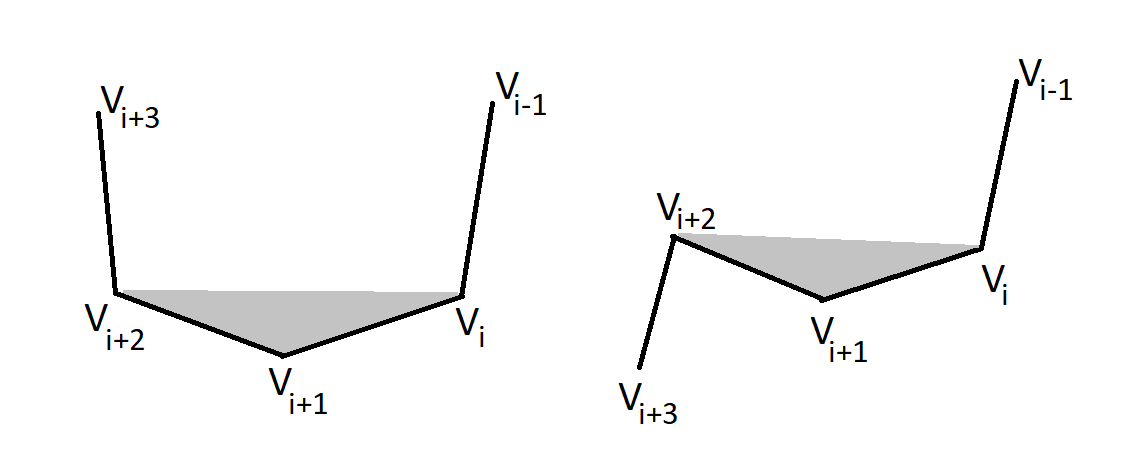}
    \caption{A flattening on the left, a non-flattening on the right}
    \label{figure_flattening_non_flattening}
\end{figure}

\begin{remark}
The previous definition implies that, if the triple $\{v_i,v_{i+1},v_{i+2}\}$ is a flattening, then the vectors $e_{i-1}$ and $e_{i+2}$ point to different sides of the plane generated by $\{v_i, e_i, e_{i+1}\}$. This in turn implies that $u_{i-1}$ and $u_{i+2}$ are on different sides of $span\{u_i,u_{i+1}\}$ (see figure \ref{figure_spherical_flattening}).
\end{remark}

\begin{figure}
    \centering
    \includegraphics[scale=0.3]{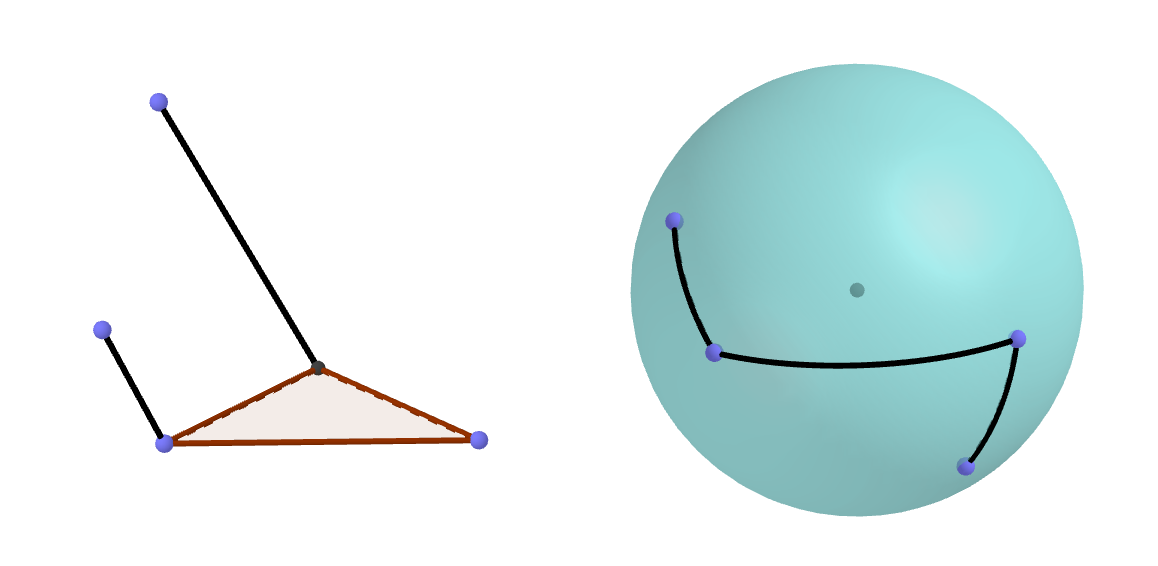}
    \caption{Flattening of a polygon $P$ and the corresponding inflection of the tangent indicatrix $Q$.}
    \label{figure_spherical_flattening}
\end{figure}

The main goal of this article is to prove the following result:

\begin{theorem}
\label{segre_theorem_discrete}
A Segre polygon with at least 4 vertices has at least 4 flattenings.
\end{theorem}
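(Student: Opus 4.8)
The plan is to translate ``being a flattening'' into a combinatorial condition on the signs of a sequence of determinants, and then to prove the resulting spherical statement by induction on the number of vertices. For each vertex $u_i$ of the indicatrix $Q$, set $\delta_i = \operatorname{sign}\det(u_{i-1},u_i,u_{i+1})$, the orientation of the spherical triangle on three consecutive vertices (equivalently, the sign of the discrete torsion of $P$ at $v_{i+1}$). First I would verify, using the Remark, that $\{v_i,v_{i+1},v_{i+2}\}$ is a flattening exactly when $u_{i-1}$ and $u_{i+2}$ lie on opposite sides of the great circle through $u_i$ and $u_{i+1}$. Since $u_{i-1}$ lies on its positive side iff $\det(u_{i-1},u_i,u_{i+1})>0$ and $u_{i+2}$ lies on the positive side iff $\det(u_i,u_{i+1},u_{i+2})>0$ (a cyclic reordering of the triple product), this is precisely the condition $\delta_i\neq\delta_{i+1}$. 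Hence the number of flattenings of $P$ equals the number of sign changes of the cyclic sequence $(\delta_1,\dots,\delta_n)$, which is automatically even; proving the Theorem therefore reduces to excluding $0$ and $2$ sign changes.

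Next I would record the single global constraint the hypothesis supplies. Because $P$ is closed, $\sum_i |e_i|\,u_i=\sum_i e_i=0$, so a positive combination of the $u_i$ vanishes; together with genericity this places the origin in the interior of $\operatorname{conv}\{u_1,\dots,u_n\}$, and consequently $Q$ is contained in no closed hemisphere. This is the discrete analog of Segre's hypothesis, and it comes for free rather than as an extra assumption. The case of $0$ sign changes is then disposed of quickly: if all $\delta_i$ share one sign, $Q$ is locally convex at every vertex and, being simple, bounds a spherically convex disk, which would lie in a closed hemisphere, contradicting the previous paragraph.

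The substantial step is to exclude exactly $2$ sign changes, and for this I would induct on $n$. The base case $n=4$ is immediate: for a quadrilateral the vertices preceding and following any triple coincide, $v_{i-1}=v_{i+3}$, so every triple is trivially a flattening and there are $4$ of them. For the inductive step I would exhibit a vertex $u_j$ whose deletion --- replacing the arcs $[u_{j-1},u_j]$ and $[u_j,u_{j+1}]$ by the single arc $[u_{j-1},u_{j+1}]$ --- produces a spherical polygon $Q'$ that is still simple, is still contained in no closed hemisphere, and has no more flattenings than $Q$. Granting such a $u_j$, the inductive hypothesis gives $Q'$ at least $4$ flattenings, whence $Q$ has at least $4$ as well. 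The flattening-count control I would obtain by deleting a locally convex vertex clipped as a spherical ``ear'': straightening a convex corner cannot create a new sign change, so the count does not increase.

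The genuine obstacle, and the heart of the argument, is producing a vertex whose removal \emph{simultaneously} preserves simplicity and the no-closed-hemisphere property. I would attack this with convex geometry: analyze the polytope $K=\operatorname{conv}\{u_i\}$, separate the vertices that are extreme points of $K$ from those in its interior, invoke the spherical ear-clipping property to keep the clipped polygon simple, and argue that one can always select an ear whose contribution to surrounding the origin is redundant, so that the origin stays interior after deletion. The difficulty is that simplicity is a global non-crossing condition while the hemisphere property is a global convex-position condition, and reconciling the two at once is exactly the subtle point; I expect the bulk of the work, together with several auxiliary lemmas on spherical polygons, to be concentrated here.
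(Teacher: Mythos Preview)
Your plan is correct and mirrors the paper's approach almost exactly: reduce to sign changes on the indicatrix, use that closedness of $P$ forces $Q$ to be balanced, and induct on $n$ by deleting a vertex that keeps $Q$ both simple and balanced while not increasing the inflection count. Where you remain vague is precisely where the paper does the real work, so let me point out the two mechanisms you would need to sharpen.

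For the existence of a deletable vertex, the paper makes your intuition quantitative via a pigeonhole count: it proves separately that at least $n-3$ vertices are \emph{nonessential} (removing them keeps $Q$ balanced; this is a Carath\'eodory-type argument on the cone $\overline{\mathcal{C}}(u_1,\dots,u_n)=\mathbb{R}^3$) and that at least $4$ vertices are \emph{good} (removing them keeps $Q$ simple; this is obtained by triangulating each of the two regions bounded by $Q$ and taking leaves of the dual tree, which is exactly your ``spherical ear'' idea). Since $(n-3)+4>n$, the two sets intersect. Your sketch ``select an ear whose contribution to surrounding the origin is redundant'' is right in spirit but does not yet explain why such an ear must exist; the counting is what closes that gap.

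For the inflection-count control, your one-line claim that ``straightening a convex corner cannot create a new sign change'' is not self-evident: deleting $u_i$ alters not only $\delta_i$ but also $\delta_{i-1}$ and $\delta_{i+1}$, and one must check that these cannot flip in a way that manufactures new alternations. The paper handles this by first observing that goodness of $u_i$ (together with simplicity of $Q$) forces $u_{i-2}$ and $u_{i+2}$ to lie outside the triangle $u_{i-1}u_iu_{i+1}$, and then running through the finitely many remaining configurations of $u_{i-2},u_{i+2}$ relative to the three lines involved; in every case the difference in inflection counts is $0$, $+2$, or $+4$. Your separate treatment of the zero-sign-change case is correct but redundant once the induction is in place.
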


It is important to notice that, although this theorem states the result for Segre polygons in $\mathbb{R}^3$, its proof will work entirely within the realm of certain spherical polygons in $\mathbb{S}^2$. To get a feeling by what we mean by this, first we notice that the previous remark suggests the following definition:

\begin{definition}
Given a spherical polygon $Q \subset \mathbb{S}^2$, a \textbf{(spherical) inflection} of $Q$ is a pair $\{u_i,u_{i+1}\}$ such that $u_{i-1}$ and $u_{i+2}$ are in different sides of the plane spanned by $\{u_i,u_{i+1}\}$. Equivalently, $u_{i-1}$ and $u_{i+2}$ are in different hemispheres determined by the spherical line spanned by $\{u_i,u_{i+1}\}$.
\end{definition}

The condition that $u_{i-1}$ and $u_{i+2}$ are in different hemispheres determined by the spherical line spanned by $\{u_i,u_{i+1}\}$ is equivalent to the condition that the determinants $\epsilon_{i-1} = [u_{i-1},u_i,u_{i+1}]$ and $\epsilon_i = [u_i,u_{i+1},u_{i+2}] = [u_{i+2},u_i,u_{i+1}]$ have opposite signs. Consequently, Theorem \ref{segre_theorem_discrete} states that, if a spherical polygon $Q$ is the tangent indicatrix of a polygon and does not have self-intersections, then the cyclic sequence $(\epsilon_1,\epsilon_2,...,\epsilon_n$) has at least 4 sign changes (where each $\epsilon_i$ is defined as $[u_i,u_{i+1},u_{i+2}]$).

\section{The Cone Condition}

It is instructive to see first the following example of spherical polygon.

\begin{example}
\label{not_tangent_indicatrix}
Let $Q = [u_1,u_2,u_3,u_4] \subset \mathbb{S}^2$ be a spherical polygon, where
$u_1 = \left( \frac{\sqrt{2}}{2}, 0, \frac{\sqrt{2}}{2} \right)$,
$u_2 = \left( 0, \frac{\sqrt{2}}{2}, \frac{\sqrt{2}}{2} \right)$,
$u_3 = \left( -\frac{\sqrt{2}}{2}, 0, \frac{\sqrt{2}}{2} \right)$ and
$u_4 = \left( 0, -\frac{\sqrt{2}}{2}, \frac{\sqrt{2}}{2} \right)$.

It is not hard to (visually) see that $Q$ does not have any flattenings (see figure \ref{figure_counterexample}). It is also not hard to check algebraically that this is indeed the case: all determinants $\epsilon_1$, $\epsilon_2$, $\epsilon_3$ and $\epsilon_4$ are positive.
\end{example}

\begin{figure}
    \centering
    \includegraphics[scale=0.3]{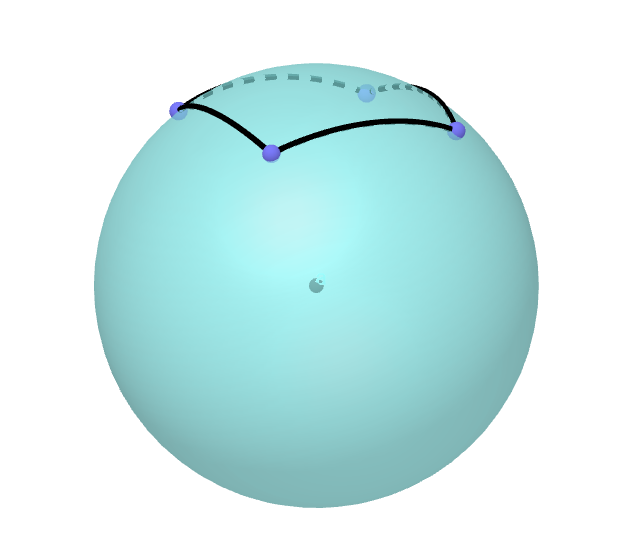}
    \caption{A spherical polygon without inflections.}
    \label{figure_counterexample}
\end{figure}

The spherical polygon of the previous example does not have any self-intersections, but it does not have any inflections. Is that a counterexample to our result? The answer is no. The reason why this happened is because \textbf{there is no space polygon whose tangent indicatrix is $Q$}.

More generally, consider a spherical polygon $Q \subset \mathbb{S}^2$ contained in a closed hemisphere, but not entirely contained in a spherical line. We may assume that this hemisphere is the one above the $xy$-plane (rotate the sphere $\mathbb{S}^2$ for that to be the case). This implies that all vectors $u_1,u_2,...,u_n$ (and consequently $e_1,e_2,...,e_n$) have $z$-coordinate equal or greater than zero. Since $Q$ is not entirely contained in any spherical line, at least one of the $e_i$, say $e_n$, have positive $z$-coordinate. Suppose that there is a space polygon $P = [v_1,v_2,...,v_n]$ whose tangent indicatrix is $Q$. Denoting by $z(v_i)$ and $z(e_i)$ the $z$-coordinate of $v_i$ and $e_i$ respectively, we have
$$z(v_1) \leq z(v_1) + z(e_1) = z(v_2) \leq z(v_2) + z(e_2) = z(v_3) \leq ...,$$
since each $z(e_i)$ is equal or greater than zero. Now, because $z(e_n)$ is strictly greater than zero, we have
$$ z(v_1) \leq z(v_2) \leq ... \leq z(v_{n-1}) \leq z(v_n) < z(v_n) + z(e_n) = z(v_1),$$
i.e., $z(v_1) < z(v_1)$. This contradiction implies that there is no such polygon $P$ whose tangent indicatrix is $Q$. We have therefore proved

\begin{proposition}
\label{necessity_tangent_indicatrix}
A necessary condition for a spherical polygon $Q \subset \mathbb{S}^2$, not entirely contained in a spherical line, to be the tangent indicatrix of some polygon $P \subset \mathbb{R}^3$ is that it cannot be contained in any closed hemisphere, i.e.,it must intersect every great circle of $\mathbb{S}^2$.
\end{proposition}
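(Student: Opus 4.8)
The plan is to prove the contrapositive: assuming that $Q$ \emph{is} contained in some closed hemisphere (while not being contained in any spherical line), I would exhibit a contradiction with the existence of a polygon $P$ whose tangent indicatrix is $Q$. The starting observation is that a closed hemisphere is the intersection of $\mathbb{S}^2$ with a closed half-space bounded by a plane through the origin; after applying a rotation of $\mathbb{S}^2$ (an orthogonal transformation of $\mathbb{R}^3$, which carries tangent indicatrices to tangent indicatrices) I may assume this half-space is $\{z \geq 0\}$. Thus every vertex $u_i$ of $Q$ satisfies $z(u_i) \geq 0$.

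Next I would transfer this to the edges of the hypothetical polygon $P = [v_1,\dots,v_n]$. Since $u_i = e_i/|e_i|$, we have $z(e_i) = |e_i|\, z(u_i) \geq 0$ for every $i$. Here I would invoke the hypothesis that $Q$ is not contained in a spherical line: the only spherical line lying inside the closed upper hemisphere is its boundary equator $\{z = 0\}$, and if all $u_i$ had $z(u_i)=0$ then the vertices together with the minimal-length arcs joining them would all lie in that equator, forcing $Q$ into a spherical line. Hence at least one edge, say $e_k$, has $z(e_k) > 0$.

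Finally I would use that $P$ is closed. Writing $v_{i+1} = v_i + e_i$ and summing around the cycle gives $\sum_{i=1}^n e_i = 0$, and in particular $\sum_{i=1}^n z(e_i) = 0$. But every summand is non-negative and at least one is strictly positive, so $\sum_{i=1}^n z(e_i) > 0$, a contradiction. Equivalently, the quantities $z(v_i)$ form a cyclic sequence that is non-decreasing and strictly increases at least once, yielding the absurdity $z(v_1) < z(v_1)$.

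I do not expect a serious obstacle, since at heart this is a telescoping argument. The one point needing care is the precise role of the \emph{not contained in a spherical line} hypothesis: it is exactly what upgrades the weak inequality $\sum z(e_i) \geq 0$ to a strict one, and without it a degenerate polygon sitting in the equator would make the contradiction vanish. I would also be careful to justify the reduction to the upper hemisphere as a genuine rotation, so that the relationship "$Q$ is the tangent indicatrix of $P$" is preserved under the normalization.
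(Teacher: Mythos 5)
Your proposal is correct and follows essentially the same route as the paper: rotate so the closed hemisphere is $\{z \geq 0\}$, note that all $z(e_i) \geq 0$ with at least one strictly positive because $Q$ is not contained in a spherical line, and derive the contradiction from the closedness relation $\sum_i e_i = 0$ (the paper phrases this as the telescoping chain $z(v_1) \leq \cdots \leq z(v_n) < z(v_1)$, which you also note as equivalent). Your extra care in justifying why some $z(u_k) > 0$ and why the rotation preserves the tangent-indicatrix relationship is a slight improvement in rigor but not a different argument.
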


It turns out that the converse of the previous proposition is also true.

\begin{proposition}
\label{proposition_lifting}
If a spherical polygon $Q \subset \mathbb{S}^2$, not entirely contained in a spherical line, is not contained in any closed hemisphere (equivalently, it intersects every great circle of $\mathbb{S}^2$), then $Q$ is the tangent indicatrix of some polygon $P$ in $\mathbb{R}^3$.
\end{proposition}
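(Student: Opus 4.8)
The plan is to reduce the construction of $P$ to a single closing condition on the edge directions and then to settle that condition by convex geometry. Observe that any polygon $P=[v_1,\dots,v_n]$ whose tangent indicatrix is $Q$ must have edges of the form $e_i = v_{i+1}-v_i = \lambda_i u_i$ with $\lambda_i = |e_i| > 0$; conversely, given any positive numbers $\lambda_1,\dots,\lambda_n$, setting $v_1$ arbitrarily and $v_{i+1} = v_i + \lambda_i u_i$ produces a polygonal line whose $i$-th edge points in the direction $u_i$. This line closes up (that is, $v_{n+1}=v_1$) precisely when $\sum_{i=1}^n \lambda_i u_i = 0$. Hence the proposition is equivalent to the purely linear-algebraic statement that, under the hypothesis on $Q$, the origin admits a representation $\sum_{i=1}^n \lambda_i u_i = 0$ with all $\lambda_i > 0$.

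Next I would translate the hypothesis into a statement about the convex hull $C = \mathrm{conv}\{u_1,\dots,u_n\}$. First note that it suffices to use the weaker hypothesis that the \emph{vertices} $u_i$ are not contained in any closed hemisphere: since a closed hemisphere is geodesically convex, if every $u_i$ lay in some closed hemisphere then each minimal arc of $Q$ would too, and thus $Q$ would be contained in that hemisphere, contrary to assumption. Now, the $u_i$ being contained in the closed hemisphere $\{x : \langle x,a\rangle \ge 0\}$ means $\langle u_i, a\rangle \ge 0$ for all $i$, i.e.\ $\min_i \langle u_i, a \rangle \ge 0$. Therefore the hypothesis that the $u_i$ lie in no closed hemisphere reads: for every $a \neq 0$ one has $\min_i \langle u_i, a\rangle < 0$. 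Via the support function of $C$, this is exactly the condition that $0$ lies in the interior of $C$ (in particular the $u_i$ span $\mathbb{R}^3$, since otherwise they would lie on a great circle and hence in a closed hemisphere).

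It then remains to promote membership of $0$ in the interior of $C$ to a strictly positive representation of $0$. Here is the step I expect to carry the weight: for each index $j$, because $0$ is interior and $u_j \neq 0$, the point $-\delta u_j$ lies in $C$ for some (uniform) small $\delta > 0$, so $-\delta u_j = \sum_i \mu^{(j)}_i u_i$ with $\mu^{(j)}_i \ge 0$. Rearranging gives $0 = \delta u_j + \sum_i \mu^{(j)}_i u_i$, and summing these $n$ identities over $j$ yields $0 = \sum_i \big(\delta + \sum_j \mu^{(j)}_i\big) u_i$, a representation of the origin in which every coefficient $\lambda_i = \delta + \sum_j \mu^{(j)}_i$ is at least $\delta > 0$. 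Feeding these $\lambda_i$ into the construction of the first paragraph produces the desired polygon $P$.

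The genuinely delicate points are the two equivalences in the middle paragraph — correctly identifying the closed-hemisphere condition with $0 \in \mathrm{int}(C)$ through the support function, and the reduction from the arc version of the hypothesis to the vertex version — together with the insistence on strict positivity of all $\lambda_i$ in the final step, which is precisely where interiority (rather than mere membership) of $0$ in $C$ is used; the remaining bookkeeping is routine.
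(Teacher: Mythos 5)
Your proposal is correct, but it takes a genuinely different route from the paper's. The paper reduces the problem to finding positive $\lambda_i$ with $\sum_i \lambda_i u_i = 0$ exactly as you do, but then proceeds by induction on $n$: the base case $n=4$ is Proposition \ref{proposition_characterization} ($-u_1 \in \mathcal{C}(u_2,u_3,u_4)$ yields the closing relation directly), and the inductive step uses Lemma \ref{lemma_pick_up} to locate a vertex whose removal leaves the remaining points balanced, applies the induction hypothesis to those points, and then adds a second relation $\mu_i u_i + \mu_j u_j + \mu_k u_k + 1\cdot u_{n+1} = 0$ obtained from the cone containing $-u_{n+1}$. That argument relies on the standing genericity assumption (no three vertices on a spherical line) and on the chain Lemma \ref{lemma_all_R3} -- Proposition \ref{proposition_caratheodory} -- Lemma \ref{lemma_pick_up}. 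Your argument is direct and non-inductive: the identification of the hypothesis with $0$ lying in the interior of $\mathrm{conv}\{u_1,\dots,u_n\}$ via separation is sound, your summation of the $n$ identities $0 = \delta u_j + \sum_i \mu^{(j)}_i u_i$ is a clean device for forcing every coefficient to be at least $\delta$, and you even handle the reduction from the arc formulation of the hypothesis to the vertex formulation, which the paper leaves implicit. What your route buys is brevity, no induction, and independence from the genericity assumption (the paper itself notes the general case is provable but chooses not to). What the paper's route buys is economy of means at the level of the whole article: Lemma \ref{lemma_pick_up} and the cone formalism are indispensable later for Theorem \ref{segre_theorem_discrete_spherical}, so once that machinery is in place Proposition \ref{proposition_lifting} comes essentially for free, and the inductive structure foreshadows the proof of the main theorem.
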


A proof of Proposition \ref{proposition_lifting} will be provided later. In order to do so, as well to prepare the way for the proof of Theorem \ref{segre_theorem_discrete}, it will be again useful to express the geometry of the configuration of points in terms of determinants.

Recall what we have done so far: given a polygon $P = [v_1,...,v_n] \subset \mathbb{R}^3$, we calculated its edges $\{e_1,...,e_n\}$ and normalized them, obtaining $\{u_1,...,u_n\}$. Now we want to go the other way around: given $Q = [u_1,...u_n]$, we must obtain $\{e_1,...,e_n\}$ as an edge set of some polygon $P = [v_1,...,v_n]$.

Notice that, since a space polygon $P$ is closed,
$$ v_1 + e_1 + e_2 + ... + e_n = v_2 + e_2 + ... + e_n = ... =$$
$$ = v_n + e_n = v_1.$$
Thus $e_1 + ... + e_n = 0$, the zero vector. Conversely, if $e_1,...,e_n$ are such that their sum is zero, then one can choose an arbitrary point $v \in \mathbb{R}^3$ and put $v_1 = v$, $v_2 = v_1 + e_1$, ..., and $v_n = v_{n-1} + e_{n-1}$. Since $v_n + e_n = v_1 + e_1 + e_2 + ... + e_{n-1} + e_n = v_1 + 0 = v_1$, we obtain a closed polygon $P = [v_1,...,v_n]$ whose "not normalized tangent indicatrix" is the space polygon $[e_1,...,e_n]$.

Therefore, it is easy to pass from $\{e_1,...,e_n\}$ to $\{v_1,...,v_n\}$. The difficult step is, given $\{u_1,...,u_n\}$, to rescale them so that the new vectors sum up to zero. Since for each $i \in \{1,...,n\}$ the vectors $e_i$ and $u_i$ point to the same direction, what we want are positive real numbers $\lambda_1,...,\lambda_n$ such that $e_i = \lambda_i u_i$ for each $i \in \{1,...,n\}$, and with sum
$$ e_1 + ... + e_n = \lambda_1 u_1 + ... + \lambda_n u_n $$
equal to zero.

One can already see how we can use the fact of the $\{u_1,...,u_n\}$ not be entirely contained in one hemisphere: for any $u_i$ there must be a certain number of vectors which, for a convenient sum, cancel out the (possibly rescaled) vector $u_i$. At this point we introduce the following definitions:

\begin{definition}
Given m vectors $w_1,...,w_m \in \mathbb{R}^N$, the \textbf{closed cone} generated by $\{w_1,...,w_m\}$ is the set defined by
$$ \overline{\mathcal{C}}(w_1,...,w_n) = \{\lambda_1 w_1 + ... + \lambda_n w_m ; \lambda_i \geq 0 \text{ for each } i \in \{1,...,m\}\}.$$
Similarly, the \textbf{open cone} generated by $\{w_1,...,w_m\}$ is the set defined by
$$ \mathcal{C}(w_1,...,w_n) = \{\lambda_1 w_1 + ... + \lambda_n w_m ; \lambda_i > 0 \text{ for each } i \in \{1,...,m\}\}.$$
\end{definition}

Given 3 linearly independent vectors $u_2,u_3,u_4 \in \mathbb{R}^3$, any vector $u \in \mathbb{R}^3$ can be written as a unique linear combination of $u_2$, $u_3$ and $u_4$. If in addition $u$ is contained in $\mathcal{C}(u_2,u_3,u_4)$, then the coefficients $\lambda_2,\lambda_3,\lambda_4$ are all positive.  It is clear that in this case the plane $span\{u_2,u_3\}$ does not separate $u$ and $u_4$. In terms of determinants, this means that
$$ sign[u,u_2,u_3] = sign[u_4,u_2,u_3] = sign[u_2,u_3,u_4].$$
Analogously, we deduce that
$$ sign[u,u_3,u_4] = sign[u_2,u_3,u_4]$$
and
$$ sign[u,u_2,u_4] = sign[u_3,u_2,u_4] = -sign[u_2,u_3,u_4].$$

Conversely, it is clear that if a unit vector $u$ satisfies the above three equations, then $u \in \mathcal{C}(u_2,u_3,u_4)$.

Now, using the notion of cone, let us examine the following situation: Suppose that we are given four unit vectors $\{u_1,u_2,u_3,u_4\} \subset \mathbb{R}^3$, not all contained in the same closed hemisphere. Assume $\{u_2,u_3,u_4\}$ to be linearly independent.

We claim that $-u_1 \in C(u_2,u_3,u_4)$. For suppose that this is not the case, i.e., that one of the last three equations (say the first one) does not hold. If
$$ sign[-u_1,u_2,u_3] \neq sign[u_2,u_3,u_4],$$
then either $[u_1,u_2,u_3] = 0$ (in which case $u_1$, $u_2$ and $u_3$ are in the same spherical line and therefore $u_1$, $u_2$, $u_3$ and $u_4$ are on the same closed hemisphere) or $sign[u_1,u_2,u_3] = sign[u_2,u_3,u_4]$, i.e., $u_1$ and $u_4$ are on the same side of $span\{u_2,u_3\}$, i.e., $u_1$, $u_2$, $u_3$ and $u_4$ are on the same closed hemisphere.

Similarly, assuming that one of the other two equations does not hold, one gets another contradiction.

Now, assume that the set of unit vectors $\{u_1,u_2,u_3,u_4\}$ is contained in a closed hemisphere $\overline{H}$. Denote by $H'$ the open hemisphere which is the reflection of the open hemisphere $H$. Since $u_1 \in H$ and $C(-u_2,-u_3,-u_4) \cap \mathbb{S}^2 \subset H'$, and moreover $\overline{H} \cap H' = \emptyset$, it follows that $u_1 \notin C(-u_2,-u_3,-u_4)$.

We have therefore proved:

\begin{proposition}
\label{proposition_characterization}
Given any 4 vectors $u_1,u_2,u_3,u_4$ in $\mathbb{S}^2$ such that $\{u_2,u_3,u_4\}$ is linearly independent, the following conditions are equivalent:

(a) $u_1$, $u_2$, $u_3$ and $u_4$ are not on the same closed hemisphere;

(b) $-u_1 \in \mathcal{C}(u_2,u_3,u_4)$;

(c) $sign[u_1,u_2,u_3] = sign[u_1,u_3,u_4] = - sign[u_1,u_2,u_4] = - sign[u_2,u_3,u_4]$.
\end{proposition}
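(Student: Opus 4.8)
The plan is to establish the two equivalences (b)$\Leftrightarrow$(c) and (a)$\Leftrightarrow$(b) separately, after noting that all the arguments take place under the (generically satisfied) hypothesis that any three of $u_1,u_2,u_3,u_4$ are linearly independent, equivalently that all four triple products $[u_i,u_j,u_k]$ are nonzero. In the degenerate situation where three of the vectors lie on a common great circle the statement must be read with this general position understood, since there $-u_1$ may sit in the lower-dimensional open cone $\mathcal{C}(u_2,u_3,u_4)$ while all four vectors still lie on a closed hemisphere, so (b) can hold while (a) fails.

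First I would prove (b)$\Leftrightarrow$(c) by Cramer's rule. Since $u_2,u_3,u_4$ form a basis, write $-u_1=\lambda_2 u_2+\lambda_3 u_3+\lambda_4 u_4$; Cramer gives $\lambda_2=[-u_1,u_3,u_4]/[u_2,u_3,u_4]$, $\lambda_3=[u_2,-u_1,u_4]/[u_2,u_3,u_4]$ and $\lambda_4=[u_2,u_3,-u_1]/[u_2,u_3,u_4]$. Using the antisymmetry and cyclic invariance of the determinant these become
\[ \lambda_2=\frac{-[u_1,u_3,u_4]}{[u_2,u_3,u_4]},\quad \lambda_3=\frac{[u_1,u_2,u_4]}{[u_2,u_3,u_4]},\quad \lambda_4=\frac{-[u_1,u_2,u_3]}{[u_2,u_3,u_4]}, \]
so that $\lambda_2,\lambda_3,\lambda_4>0$ (condition (b)) holds precisely when $\operatorname{sign}[u_1,u_3,u_4]=-\operatorname{sign}[u_2,u_3,u_4]$, $\operatorname{sign}[u_1,u_2,u_4]=\operatorname{sign}[u_2,u_3,u_4]$ and $\operatorname{sign}[u_1,u_2,u_3]=-\operatorname{sign}[u_2,u_3,u_4]$, which is exactly the chain of equalities in (c). The forward half of this is the computation already carried out in the text preceding the statement.

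Next I would prove (a)$\Leftrightarrow$(b). For the contrapositive $\neg(\mathrm{a})\Rightarrow\neg(\mathrm{b})$, suppose $\langle n,u_i\rangle\geq 0$ for all $i$ and some unit $n$. If $-u_1=\sum_{i\geq 2}\lambda_i u_i$ with $\lambda_i>0$, then $-\langle n,u_1\rangle=\sum_{i\geq 2}\lambda_i\langle n,u_i\rangle\geq 0$, forcing $\langle n,u_1\rangle\leq 0$ and hence $\langle n,u_i\rangle=0$ for every $i$; but then all four vectors lie on the great circle $n^{\perp}$, contradicting general position. For the reverse direction $\neg(\mathrm{b})\Rightarrow\neg(\mathrm{a})$, I would invoke (b)$\Leftrightarrow$(c): if (b) fails then some equality of (c) fails, say $\operatorname{sign}[u_1,u_2,u_3]=\operatorname{sign}[u_2,u_3,u_4]$ in place of its negation, which says that $u_1$ and $u_4$ lie on the same side of the plane $\mathrm{span}\{u_2,u_3\}$. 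Taking $n=\pm\,u_2\times u_3$ with the sign chosen so that $\langle n,u_1\rangle>0$, one gets $\langle n,u_1\rangle>0$, $\langle n,u_4\rangle>0$ and $\langle n,u_2\rangle=\langle n,u_3\rangle=0$, so all four vectors sit in the closed hemisphere $\{x:\langle n,x\rangle\geq 0\}$; the remaining cases are identical after relabelling.

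The main obstacle is the direction (a)$\Rightarrow$(b): unlike the functional argument for its converse, it requires turning the failure of a single determinant inequality into an \emph{explicit} separating hemisphere, and it is here that genericity really matters, since the construction degenerates precisely when three of the $u_i$ become coplanar. I therefore expect the delicate points to be the bookkeeping of \emph{which} sign condition fails and the verification that the constructed normal $n$ is nonzero (which is where $[u_1,u_2,u_3]\neq 0$ is used) and works in each of the four symmetric cases.
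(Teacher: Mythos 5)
Your proof is correct and follows essentially the same route as the paper: the Cramer's-rule identification of the cone coefficients with ratios of determinants gives (b)$\Leftrightarrow$(c), and the separating-plane argument (``some sign equality fails, so $\mathrm{span}\{u_i,u_j\}$ bounds a closed hemisphere containing all four points'') gives $\neg(\mathrm{b})\Rightarrow\neg(\mathrm{a})$, which is exactly the paper's argument. If anything you are slightly more complete than the paper: you make the (c)$\Rightarrow$(b) direction of Cramer explicit, you supply the inner-product argument for (b)$\Rightarrow$(a) (which the paper leaves unstated), and you correctly flag that the equivalence requires the general-position assumption that no three of the $u_i$ are spherically collinear --- an assumption the paper only imposes globally a little later in the same section.
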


If $Q = [u_1,u_2,u_3,u_4]$ is a spherical polygon, then the hypothesis that the points are not on the same hemisphere is equivalent to $-u_1 \in \mathcal{C}(u_2,u_3,u_4)$, which in turn is equivalent to the fact that there are positive real numbers $\lambda_2$, $\lambda_2$ and $\lambda_3$ such that $-u_1 = \lambda_2 u_2 + \lambda_3 u_3 + \lambda_4 u_4$, i.e.,
$$ 1 \cdot u_1 + \lambda_2 u_2 + \lambda_3 u_3 + \lambda_4 u_4 = 0.$$

Therefore, in this case we were successful at lifting the vectors $u_1,...,u_4$
to rescaled vectors $e_1,...,e_4$ such that their sum is zero, which in turn implies the existence of (an infinite number of) polygons $P$ whose tangent indicatrix is $Q$.

An interesting and simple geometric fact that follows immediately from the previous proposition is the following:

\begin{corollary}
\label{corollary_characterization}
Given any 4 vectors $u_1,u_2,u_3,u_4$ in $\mathbb{S}^2$ such that any triple of them is linearly independent, the following conditions are equivalent:

(a) $-u_1 \in \mathcal{C}(u_2,u_3,u_4)$;

(b) $-u_2 \in \mathcal{C}(u_1,u_3,u_4)$.

(c) $-u_3 \in \mathcal{C}(u_1,u_2,u_4)$.

(d) $-u_4 \in \mathcal{C}(u_1,u_2,u_4)$.
\end{corollary}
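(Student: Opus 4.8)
The plan is to derive the corollary directly from Proposition \ref{proposition_characterization} by exploiting the symmetry of the hemisphere condition, so that essentially no new computation is required. The crucial observation is that, among the three equivalent conditions in Proposition \ref{proposition_characterization}, condition (a) --- that $u_1, u_2, u_3, u_4$ do not all lie on the same closed hemisphere --- is manifestly symmetric in the four vectors: it is a property of the unordered set $\{u_1, u_2, u_3, u_4\}$ and is invariant under any permutation of the indices. By contrast, the cone condition $-u_1 \in \mathcal{C}(u_2, u_3, u_4)$ singles out $u_1$ and is not symmetric on its face. The entire corollary will follow from reconciling these two facts.

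First I would invoke Proposition \ref{proposition_characterization} literally, which gives the equivalence
$$ -u_1 \in \mathcal{C}(u_2, u_3, u_4) \iff u_1, u_2, u_3, u_4 \text{ are not on a common closed hemisphere.} $$
Next I would point out that nothing in the statement (or proof) of Proposition \ref{proposition_characterization} depends on which of the four vectors is labelled $u_1$; the proposition applies verbatim to any relabelling of $\{u_1,u_2,u_3,u_4\}$. Taking $u_2$ to play the role of the distinguished vector yields
$$ -u_2 \in \mathcal{C}(u_1, u_3, u_4) \iff u_1, u_2, u_3, u_4 \text{ are not on a common closed hemisphere,} $$
and the analogous equivalences hold with $u_3$ and with $u_4$ as the distinguished vector.

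Since the right-hand side of all four equivalences is one and the same symmetric statement, the four left-hand sides --- conditions (a) through (d) of the corollary --- are pairwise equivalent. This closes the argument. I do not expect a genuine obstacle here: the only point requiring care is the bookkeeping observation that the hemisphere condition is permutation-invariant while the individual cone conditions are not, which is precisely what licenses reapplying Proposition \ref{proposition_characterization} with the indices permuted. I would also flag in passing the evident misprint in part (d), where $\mathcal{C}(u_1, u_2, u_4)$ should read $\mathcal{C}(u_1, u_2, u_3)$, so that the distinguished vector $u_4$ is genuinely expressed in terms of the remaining three.
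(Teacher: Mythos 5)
Your proposal is correct and is essentially identical to the paper's own (one-line) proof: both reduce all four cone conditions to the permutation-invariant statement that $u_1,u_2,u_3,u_4$ do not lie on a common closed hemisphere, via Proposition \ref{proposition_characterization} applied with relabelled indices. Your flagging of the misprint in part (d), which should read $\mathcal{C}(u_1,u_2,u_3)$, is also correct.
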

\begin{proof}
By Proposition \ref{proposition_characterization}, all of the above conditions are equivalent to the condition that $u_1$, $u_2$, $u_3$ and $u_4$ are not on the same closed hemisphere.
\end{proof}

Now we want to look at configurations with more than just 4 points in $\mathbb{S}^2$. To have an idea of what problems might arise, let us look at the following examples:

\begin{example}
\label{example_three_cases}
Let $\{u_1,u_2,u_3,u_4,u_5\}$ be a set of 5 points in the sphere $\mathbb{S}^2$, where
$u_2 = \left( \frac{\sqrt{2}}{2}, 0, \frac{\sqrt{2}}{2} \right)$,
$u_3 = \left( 0, \frac{\sqrt{2}}{2}, \frac{\sqrt{2}}{2} \right)$,
$u_4 = \left( -\frac{\sqrt{2}}{2}, 0, \frac{\sqrt{2}}{2} \right)$ and
$u_5 = \left( 0, -\frac{\sqrt{2}}{2}, \frac{\sqrt{2}}{2} \right)$ (the same points of Example \ref{not_tangent_indicatrix}, except that the indices are translated by 1).

Depending on the position of the vector $u_1$, its antipode $-u_1$ might be in different regions of $\mathbb{S}^2$. Figure \ref{figure_cases_five_points} shows some of the possibilities.

In case (a), $-u_1 \in \mathcal{C}(u_2,u_3,u_4) \cap \mathcal{C}(u_2,u_3,u_5)$.

In case (b), $-u_1 \in \mathcal{C}(u_2,u_3,u_4) \cap \mathcal{C}(u_3,u_5)$.

Finally, in case (c), $-u_1 \in \mathcal{C}(u_2,u_4) \cap \mathcal{C}(u_3,u_5)$.

If $-u_1$ is not in one of these configurations, then $-u_1 \notin \mathcal{C}(u_2,u_3,u_4,u_5)$. Similarly as in the proof of Proposition \ref{proposition_characterization}, there are two possibilities:
\begin{itemize}
    \item $-u_1$ is on one of the spherical edges of the quadrangular region (say, $[u_2,u_3]$) and then, since $u_4$ and $u_5$ are on the same side of $span\{u_2,u_3\}$, we have that all of the points are one the same closed hemisphere.
    \item $-u_1$ is separated by the plane $span\{u_i,u_j\}$ (where $i,j$ are some indices of $\{2,3,4,5\}$) from the remaining pair $\{u_k,u_l\}$, i.e., the points $u_1$, $u_k$ and $u_l$ are on the same side of the plane $span\{u_i,u_j\}$. In other words, all points $u_1,...,u_5$ are on the same closed hemisphere.
\end{itemize}

Therefore, for a set of points not entirely contained in a hemisphere, the three cases above are (up to symmetry) the only possibilities. Thus:
\begin{itemize}
    \item In (a), $-u_1 = \lambda_2 u_2 + \lambda_3 u_3 + \lambda_4 u_4$ and $-u_1 = \mu_2 u_2 + \mu_3 u_3 + \mu_4 u_5$, which implies
    $$ 2 u_1 + (\lambda_2+\mu_2) u_2 + (\lambda_3 + \mu_3) u_3 + \lambda_4 u_4 + \mu_5 u_5 = 0.$$
    \item In (b), $-u_1 = \lambda_2 u_2 + \lambda_3 u_3 + \lambda_4 u_4$ and $-u_1 = \mu_3 u_3 + \mu_5 u_5$, which implies
    $$ 2 u_1 + \lambda_2 u_2 + (\lambda_3+\mu_3) u_3 + \lambda_4 u_4 + \mu_5 u_5 = 0.$$
    \item In (c), $-u_1 = \lambda_2 u_2 + \lambda_4 u_4$ and $-u_1 = \mu_3 u_3 + \mu_5 u_5$, which implies
    $$ 2 u_1 + \lambda_2 u_2 + \mu_3 u_3 + \lambda_4 u_4 + \mu_5 u_5 = 0.$$
\end{itemize}
In any of these three cases, we succeeded at rescaling our original unit vectors so that their new sum equals zero. Now, if these points were originally the vertices of a spherical polygon $Q = [u_1,u_2,u_3,u_4,u_5]$, this implies the existence of (a infinite number of) polygons whose tangent indicatrix is exactly $Q$.
\end{example}

\begin{figure}
    \centering
    \includegraphics[scale=0.4]{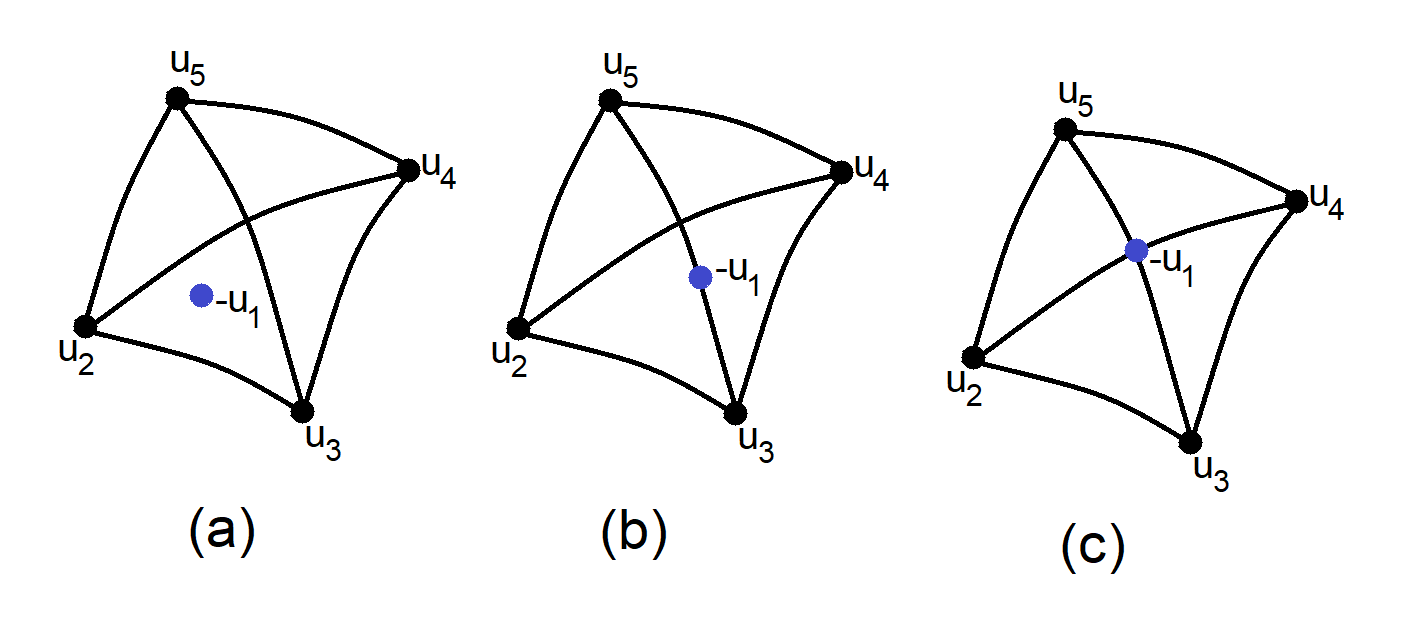}
    \caption{Three different cases}
    \label{figure_cases_five_points}
\end{figure}

\begin{example}
Now let us look at another configuration, as shown in figure \ref{figure_cases_five_points_degenerate}.

In case (a), $-u_1 \in \mathcal{C}(u_2,u_3,u_4) \cap \mathcal{C}(u_3,u_4,u_5)$, i.e., $-u_1 = \lambda_2 u_2 + \lambda_3 u_3 + \lambda_4 u_4$ and $-u_1 = \mu_3 u_3 + \mu_4 u_4 + \mu_5 u_5$ for positive $\lambda$'s and $\mu$'s, which implies that
$$ 2 u_1 + \lambda_2 u_2 + (\lambda_3+\mu_3) u_3 + (\lambda_4 + \mu_4) u_4 + \mu_5 u_5 = 0,$$ 
where all coefficients are positive.

In case (b), $-u_1 \in \mathcal{C}(u_2,u_3,u_4) \cap \mathcal{C}(u_3,u_5)$, i.e., $-u_1 = \lambda_2 u_2 + \lambda_3 u_3 + \lambda_4 u_4$ and $-u_1 = \mu_3 u_3 + \mu_5 u_5$ for positive $\lambda$'s and $\mu$'s, which implies that
$$ 2 u_1 + \lambda_2 u_2 + (\lambda_3+\mu_3) u_3 + \lambda_4 u_4 + \mu_5 u_5 = 0,$$ 
where all coefficients are positive.
These two cases are (up to symmetry) the only possibilities (if $-u_1 \notin \mathcal{C}(u_2,u_3,u_4,u_5)$, then we derive a contradiction in the same way as we did in Example \ref{example_three_cases}). Since we could rescale these points so that they sum to zero, we can then find a space polygon $P$ whose tangent indicatrix is $Q = [u_1,u_2,u_3,u_4,u_5]$.
\end{example}

\begin{figure}
    \centering
    \includegraphics[scale=0.35]{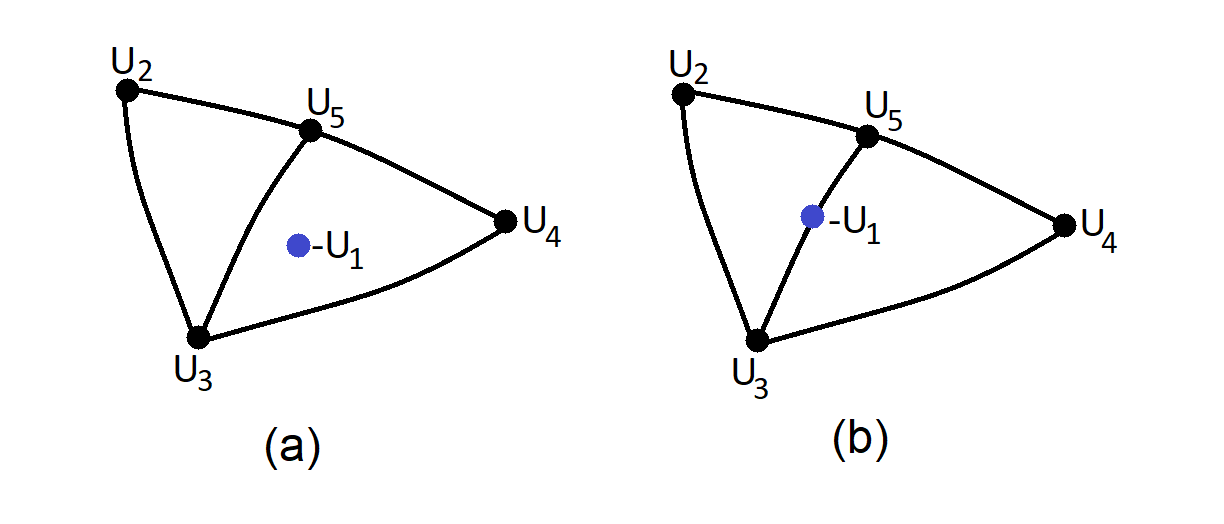}
    \caption{Two degenerate cases}
    \label{figure_cases_five_points_degenerate}
\end{figure}

\begin{example}
A third type of configuration is given by figure \ref{figure_cases_five_points_inside}. 

In case (a) and (b), $\-u_1 \in \mathcal{C}(u_2,u_3,u_4) \cap \mathcal{C}(u_2,u_3,u_5)$.

In case (c), $-u_1 \in \mathcal{C}(u_2,u_3,u_4) \cap \mathcal{C}(u_4,u_5)$.

In case (d), $-u_1 \in \mathcal{C}(u_2,u_3,u_4) \cap \mathcal{C}(u_5)$ Notice that in this case $u_5$ is the antipode of $u_1$.

Proceeding the same way as it was done in the previous examples, one shows that, if the $u_i$'s are not entirely contained in a hemisphere, then these four cases are (up to symmetry) the only possibilities. For each case one can then obtain rescaled versions of the $u_i$'s so that they sum up to zero. Therefore there is a space polygon $P$ whose tangent indicatrix is $Q = [u_1,u_2,u_3,u_4,u_5]$.
\end{example}

\begin{figure}
    \centering
    \includegraphics[scale=0.3]{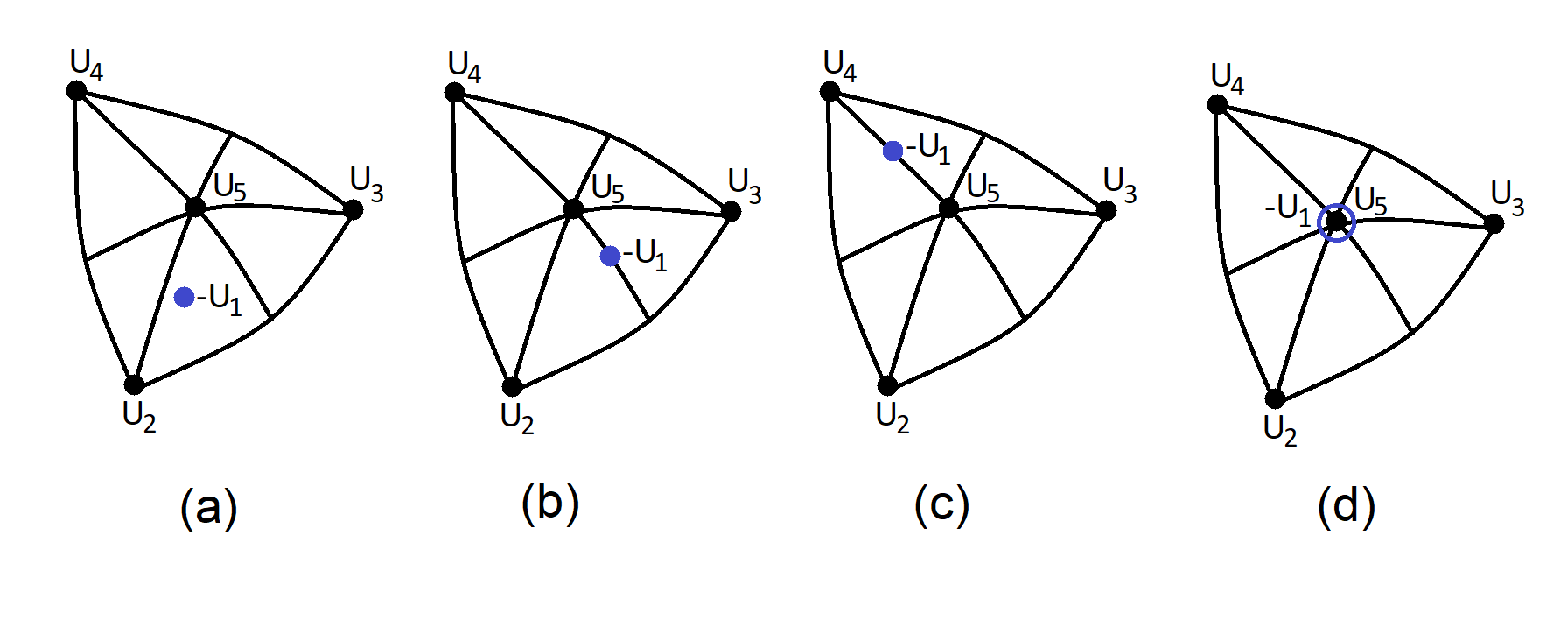}
    \caption{Four more cases}
    \label{figure_cases_five_points_inside}
\end{figure}

As the three previous examples have shown, a certain configuration of points determines a couple of cases to consider. A little thought might convince the reader that these three examples exhaust all possibilities for the relative position of the points $u_2$, $u_3$, $u_4$ and $u_5$ up to some permutation of the indices (the case in which $u_2$, $u_3$, $u_4$ and $u_5$ are in the same spherical line does not appear since it would then imply that all points from $u_1$ to $u_5$ would be on the same closed hemisphere).

Another important feature of what we have just done is that, given a unit vector $u_1$, every other unit vector from $u_2$, $u_3$, $u_4$ and $u_5$ appeared at least once as the generator of one of the cones that contained the antipode of $u_1$ (some of them appeared more than once, but that does not matter). This made it possible to obtain a sum with all the (rescaled) vectors.

Moreover, as we have seen in these examples, there might happen that three different points are in the same spherical line (i.e.,three vectors on the same plane). Although we could prove Proposition \ref{proposition_lifting} in this more general case, it will be convenient to assume that this does not happen. The reason is twofold: it will make the proof considerably simpler and, as we will see later, any spherical polygon with three non-consecutive collinear vertices can be perturbed into a spherical polygon with no three collinear vertices, but with the same number of inflections.

From now on, \textbf{we assume the following typographical conventions}: given vectors $u_i$,$u_j$ and $u_k$, we may also write $[i,j,k]$ instead of $[u_i,u_j,u_k]$. Additionally, the notation
$$ [i,j,k] \simeq [a,b,c] $$
means that $sign[i,j,k] = sign[a,b,c]$. Therefore, if the determinants have opposite signs, we could write
$$ [i,j,k] \simeq -[a,b,c].$$

Since \textbf{we are also assuming from now on that the spherical polygons considered do not have three points in the same spherical line}, any determinant calculated using a triple of the points of the spherical polygon is nonzero. Thus, in this case, $[i,j,k] \simeq -[a,b,c]$ is equivalent to and will be written as $[i,j,k] \not \simeq [a,b,c]$.

\section{Some results of Convex Geometry}

Before proving Proposition \ref{proposition_lifting}, we need some results of Convex Geometry. The proof of the first one reveals an interplay between conical sets and the notion of \textit{convexity in the sphere}. For an account of these ideas, the interested reader might consult \cite{Ferreira}.

\begin{lemma}
\label{lemma_all_R3}
Let $Q = \{u_1, u_2,..., u_n \}$ be a finite set of points on the sphere $\mathbb{S}^2$ ($n \geq 4$), not all of them on the same closed hemisphere. Then $\overline{\mathcal{C}}(u_1,...,u_n) = \mathbb{R}^3$.
\end{lemma}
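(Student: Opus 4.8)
The plan is to prove the statement by convex duality, turning the hemisphere hypothesis directly into a separation argument; this is precisely the interplay between the cone $\overline{\mathcal{C}}(u_1,\dots,u_n)$ and the closed hemispheres of $\mathbb{S}^2$ alluded to above. The starting observation is that a closed hemisphere centered at a unit vector $c$ is exactly the trace on $\mathbb{S}^2$ of the closed halfspace $\{x\in\mathbb{R}^3:\langle c,x\rangle\ge 0\}$ through the origin. Thus the hypothesis that $u_1,\dots,u_n$ are not all on a common closed hemisphere says precisely that for every nonzero $w$ there is some index $i$ with $\langle w,u_i\rangle>0$. I would also record at the outset that $\overline{\mathcal{C}}(u_1,\dots,u_n)$, being generated by finitely many vectors, is a closed convex cone containing the origin. (Note that the hypothesis forces $n\ge 4$, since any three points of $\mathbb{S}^2$ always lie on a common closed hemisphere.)

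With these preliminaries, I would argue by contraposition. Suppose $\overline{\mathcal{C}}(u_1,\dots,u_n)\neq\mathbb{R}^3$ and choose a point $p\notin\overline{\mathcal{C}}(u_1,\dots,u_n)$. Since the cone is closed and convex, the separating hyperplane theorem yields a nonzero $w$ and a scalar $\alpha$ with $\langle w,x\rangle\le\alpha<\langle w,p\rangle$ for every $x$ in the cone. Because $0$ lies in the cone we get $\alpha\ge 0$; and because the cone is invariant under multiplication by positive scalars, $t\langle w,x\rangle\le\alpha$ for all $t>0$, which upon letting $t\to\infty$ forces $\langle w,x\rangle\le 0$ for every $x$ in the cone. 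In particular $\langle w,u_i\rangle\le 0$ for all $i$, so $c=-w/|w|$ exhibits a closed hemisphere $\{x\in\mathbb{S}^2:\langle c,x\rangle\ge 0\}$ containing all the $u_i$, contradicting the hypothesis. Hence $\overline{\mathcal{C}}(u_1,\dots,u_n)=\mathbb{R}^3$.

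The step I expect to be the delicate one is the passage from an arbitrary separating functional to one that is nonpositive on the whole cone — equivalently, the justification that the separating hyperplane may be taken through the origin — which is exactly where the conical (as opposed to merely convex) structure enters, via the scaling limit above; everything else is bookkeeping. As a remark, the argument can be recast in the dual, more visibly spherical form: the conclusion $\overline{\mathcal{C}}(u_1,\dots,u_n)=\mathbb{R}^3$ is equivalent to $0\in\operatorname{int}\operatorname{conv}(u_1,\dots,u_n)$, and the hemisphere hypothesis is equivalent to the same interior condition (both say that no nonzero $w$ has $\langle w,u_i\rangle\le 0$ for all $i$), so one may alternatively prove the lemma by showing directly that the origin lies in the interior of the convex hull of the $u_i$. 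I would keep the separation version as the main line, since it makes the role of the hypothesis most transparent.
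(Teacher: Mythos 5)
Your proof is correct, but it takes a genuinely different route from the paper's. The paper argues by induction on $n$: the base case $n=4$ comes from Proposition \ref{proposition_characterization} (the four closed cones on triples of the points cover the sphere), and the inductive step splits according to whether $\{u_1,\dots,u_n\}$ is still balanced; in the hard subcase it shows that $-u_{n+1}$ lies in the spherical region $\mathcal{C}(u_1,\dots,u_n)\cap\mathbb{S}^2$ and then covers $\mathbb{R}^3$ by $\overline{\mathcal{C}}(u_1,\dots,u_n)$ together with the cones $\overline{\mathcal{C}}(u_i,u_{i+1},u_{n+1})$. You instead dualize: ``not contained in a closed hemisphere'' is exactly ``no nonzero linear functional is nonpositive on all the $u_i$,'' and a point outside the closed convex cone would produce such a functional by separation plus the scaling limit that pushes the separating hyperplane through the origin. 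Your argument is shorter, avoids all case analysis, and works verbatim in $\mathbb{R}^d$ with hemispheres replaced by closed halfspaces through the origin. What it costs is two imported facts: the separating hyperplane theorem, and the closedness of a finitely generated cone, which you assert but do not justify. The latter is the one step you should fill in: it is not automatic (the image of a closed set under $(\lambda_1,\dots,\lambda_n)\mapsto\sum\lambda_i u_i$ need not be closed in general), but it follows from the conical Carath\'eodory theorem --- which the paper proves anyway as Proposition \ref{proposition_caratheodory} --- since the cone is then a finite union of cones over linearly independent subsets, each a homeomorphic image of a closed orthant. The paper's proof, by contrast, is self-contained and stays inside the spherical picture that is reused later (e.g.\ in Lemma \ref{lemma_resulting_polygon}), at the price of length and a delicate decomposition.
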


\begin{proof}
The proof is by induction on the number of points. For $n=4$, then Proposition \ref{proposition_characterization} (or equivalently, Corollary \ref{corollary_characterization}) implies that each point $u_i$ ($i=1,2,3,4$) is such that its antipode in the open cone spanned by the other points. Since the four closed cones (each one generated by a different triple of points from the set $\{u_1,u_2,u_3,u_4\}$), restricted to the sphere, divide it into four regions, we have that $\overline{\mathcal{C}}(u_1,u_2,u_3,u_4) = \mathbb{R}^3$.

Now, assume the result true for $n$, and suppose we are given a set of $n+1$ points $Q = \{u_1,...,u_n,u_{n+1}\}$, not all of them in the same closed hemisphere. If the set $Q - \{u_{n+1}\}$ is not in the same hemisphere, then by the induction hypothesis $\mathbb{R}^3 = \overline{\mathcal{C}}(u_1,...,u_n) \subset \overline{\mathcal{C}}(u_1,...,u_n,u_{n+1}) \subset \mathbb{R}^3$, from which the result follows.

If, however, $\{u_1,...,u_n\}$ is in some closed hemisphere $H$, consider then the open region $R = \mathcal{C}(u_1,...,u_n) \cap \mathbb{S}^2 \subset \overline{\mathcal{C}}(u_1,...,u_n) \cap \mathbb{S}^2 \subset H$. We may assume that the vertices of the topological boundary of this region are all the $u_i$'s of $Q$. For if it were not the case (say $u_j$ is the topological interior of $R$), then $\overline{\mathcal{C}}(u_1,...,u_n) = \overline{\mathcal{C}}(u_1,...,\hat{u}_j,...,u_n)$ and, consequently, $\overline{\mathcal{C}}(u_1,...,u_{n+1}) = \overline{\mathcal{C}}(u_1,...,\hat{u}_j,...,u_{n+1})$. By the induction hypothesis applied to the set $\{u_1,...,\hat{u}_j,...,u_{n+1}\}$,
$$ \overline{\mathcal{C}}(u_1,...,u_{n+1}) = \overline{\mathcal{C}}(u_1,...,\hat{u}_j,...,u_{n+1}) = \mathbb{R}^3.$$

After labelling the indices, if necessary, we may assume that the boundary of the region $R$ is a convex polygon with the ordering $u_1,u_2,...,u_n$ and oriented so that the $R$ is always on the left of the polygon. This region is, therefore, the intersection of all the open hemispheres
$$H_i = \{u \in \mathbb{S}^2 ; [u,u_i,u_{i+1}] > 0 \},$$
for $i=1,2,...,n$. Notice that $u_j \in H_i$, for all $j \neq i,i+1$.

We claim that $-u_{n+1} \in R$. For if it were not in $R$, then $-u_{n+1}$ would not be in at least one of the $H_i$. This would imply that $[-u_{n+1},u_i,u_{i+1}] \leq 0$, i.e., $[u_{n+1},u_i,u_{i+1}] \geq 0$, i.e., $u_{n+1} \in \overline{H}_i$. Since all other points $u_j$'s are in $\overline{H}_i$, this implies that all points are in the closed hemisphere $\overline{H}_i$, contrary to the hypothesis.

Now, since $u_{n+1}$ is not in any of the $H_i$, then the open cones of the form $\mathcal{C}(u_i,u_{i+1},u_{n+1})$ (for all $i = 1,...,n$) are disjoint and do not intersect the open cone $\mathcal{C}(u_1,...,u_n)$. Moreover, we have that
$$ \overline{\mathcal{C}}(u_1,...,u_n) \cup \bigcup_{i=1}^n \overline{\mathcal{C}}(u_i,u_{i+1},u_{n+1}) = \mathbb{R}^3.$$
Since the set on the left is contained in $\overline{\mathcal{C}}(u_1,...,u_n,u_{n+1}) \subset \mathbb{R}^3$, the result follows.
\end{proof}

The following proposition is the conical version of the known Carathéodory's Theorem for convex sets (see for instance \cite{Hug} or \cite{Pak}). The proof of the former is similar to the usual proof of the latter result.

\begin{proposition}
\label{proposition_caratheodory}
Let $Q$ be a finite set of $n$ points in $\mathbb{R}^d$ ($n \geq d$). Let $u$ be any point of $$\overline{\mathcal{C}}(Q) = \{ \text{ finite sums of elements of the form } \lambda u; \lambda \geq 0, u \in Q \}.$$
Then there are $d$ points $u_1,...,u_d$ in $Q$ and non-negative numbers $\lambda_1,...,\lambda_d$ such that
$$ u = \lambda_1 u_1 + ... + \lambda_d u_d.$$
\end{proposition}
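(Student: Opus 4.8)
The plan is to mimic the standard proof of the classical Carathéodory theorem, replacing affine combinations with conical ones and using linear (rather than affine) dependence as the reduction mechanism. By definition of $\overline{\mathcal{C}}(Q)$, we may start from \emph{some} representation
$$ u = \lambda_1 u_1 + \dots + \lambda_m u_m, \qquad \lambda_i \geq 0, \ u_i \in Q, $$
and we may assume every $\lambda_i > 0$ (discard the terms with zero coefficient). If $m \leq d$ we are already done, padding with extra points of $Q$ and zero coefficients if necessary to reach exactly $d$ summands. So assume $m > d$ and argue that we can always produce a shorter representation, which by induction on $m$ finishes the proof.

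The key step is this reduction. Since $u_1, \dots, u_m$ are $m > d$ vectors in $\mathbb{R}^d$, they are linearly dependent, so there exist scalars $\mu_1, \dots, \mu_m$, not all zero, with $\mu_1 u_1 + \dots + \mu_m u_m = 0$. Replacing the $\mu_i$ by their negatives if needed, we may assume at least one $\mu_i$ is strictly positive. For any real $t$ we then have
$$ u = \sum_{i=1}^m (\lambda_i - t\mu_i)\, u_i, $$
so it suffices to choose $t$ making all coefficients non-negative with at least one equal to zero. Taking
$$ t = \min\left\{ \frac{\lambda_i}{\mu_i} \; : \; \mu_i > 0 \right\} $$
does exactly this: the minimum is attained at some index $i_0$ with $\mu_{i_0} > 0$, forcing $\lambda_{i_0} - t\mu_{i_0} = 0$; and for every $i$ the quantity $\lambda_i - t\mu_i$ stays non-negative, since $\mu_i \leq 0$ gives $-t\mu_i \geq 0$, while $\mu_i > 0$ gives $t \leq \lambda_i / \mu_i$. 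Dropping the vanishing term $i_0$ yields a conical representation of $u$ using at most $m-1$ points of $Q$ with non-negative coefficients.

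Iterating the reduction strictly decreases the number of summands each time, so after finitely many steps we arrive at a representation with at most $d$ terms, and we conclude by the padding remark above. The only point requiring care is the sign bookkeeping in the choice of $t$, namely guaranteeing that some $\mu_i$ is positive (arranged by the global sign flip) so that the set over which the minimum is taken is nonempty, and verifying that this minimal $t$ preserves non-negativity of \emph{all} coefficients simultaneously; this is the heart of the argument, but it is entirely elementary once the dependence relation is in hand.
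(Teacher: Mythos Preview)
Your proof is correct and follows essentially the same approach as the paper: both use a linear dependence relation among more than $d$ generators, flip signs so that some coefficient is positive, and subtract the minimal ratio $\lambda_{i_0}/\mu_{i_0}$ times the dependence relation to kill a term while keeping all coefficients non-negative. The only cosmetic difference is that the paper phrases the reduction as a contradiction to minimality of $m$, whereas you phrase it as an explicit iteration; the content is identical.
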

\begin{proof}
Given $u \in \overline{\mathcal{C}}(Q)$, we have that
$$ u = \lambda_1 u_1 + ... + \lambda_m u_m,$$
with $\lambda_i \geq 0$, for all $i \in \{1,...,m\}$. Let $m$ be the minimal number for which such a conical combination for $u$ is possible.

We claim that $\{u_1,...,u_d\}$ is linearly independent (from which it follows that $m \leq d$). For if it were linearly independent, then there would be $\alpha_1,...,\alpha_m$, not all zero, such that
$$ \sum_{i=1}^m \alpha_i u_i = 0.$$
Let $I:= \{ i \in \{1,...,m\}; \alpha_i > 0 \}$ (which can be assumed to be nonempty, otherwise we could work with $-\alpha_i$'s instead of $\alpha_i$'s). Choose $i_0 \in I$ such that
$$ \frac{\lambda_{i_0}}{\alpha_{i_0}} = \min_{i\in I} \frac{\lambda_i}{\alpha_i}.$$
Hence,
$$ \lambda_i - \frac{\lambda_{i_0}}{\alpha_{i_0}} \alpha_i \geq 0,$$
for all $i \in I$ (notice also that this inequality always holds when $\alpha_i \leq 0$). Then, we have
$$ \sum_{i=1}^m \left( \lambda_i - \frac{\lambda_{i_0}}{\alpha_{i_0}} \alpha_i \right) u_i = \sum_{i=1}^m \lambda_i u_i - \frac{\lambda_{i_0}}{\alpha_{i_0}} \sum_{i=1}^m \alpha_i u_i = \sum_{i=1}^m \lambda_i u_i - \frac{\lambda_{i_0}}{\alpha_{i_0}} \cdot 0 = u,$$
with $ \lambda_i - \frac{\lambda_{i_0}}{\alpha_{i_0}} \alpha_i \geq 0 $, for all $i\in\{1,...,m\}$, and $\lambda_{i_0} - \frac{\lambda_{i_0}}{\alpha_{i_0}} \alpha_{i_0} = 0$. This contradicts minimality of $m$.
\end{proof}

\begin{lemma}
\label{lemma_pick_up}
Let $Q = \{u_1, u_2,..., u_n \}$ be a finite set of points on the sphere $\mathbb{S}^2$, with $n \geq 5$, not all of them on the same hemisphere. Then the set
$$ X = \{u_i \in Q; \{u_1,...,\hat{u}_i,...,u_n\} \text{ is not contained on a hemisphere} \}$$
has at least $n-3$ elements.
\end{lemma}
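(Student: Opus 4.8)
The plan is to recast the whole statement in the language of conical hulls and then dualize to the space of linear relations. First I would record the basic dictionary. Since $Q$ is not contained in a closed hemisphere, Lemma \ref{lemma_all_R3} gives $\overline{\mathcal{C}}(Q)=\mathbb{R}^3$; more generally, for any finite $S\subset\mathbb{S}^2$ with $|S|\geq 4$, the set $S$ fails to lie in a closed hemisphere if and only if $\overline{\mathcal{C}}(S)=\mathbb{R}^3$ (one implication is Lemma \ref{lemma_all_R3}, the other is immediate, since a set inside a closed halfspace generates a cone inside that halfspace). Applying this to $S=Q\setminus\{u_i\}$, which has $n-1\geq 4$ points, we get that $u_i\in X$ if and only if $\overline{\mathcal{C}}(Q\setminus u_i)=\mathbb{R}^3$, and this holds if and only if $u_i\in\overline{\mathcal{C}}(Q\setminus u_i)$, i.e. $u_i$ is a nonnegative combination of the other vectors. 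Thus $u_i\notin X$ precisely when $u_i$ is conically irredundant, and the lemma becomes the statement that at most $3$ of the $u_i$ fail to lie in the cone generated by the others.

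Next I would pass to the relation space $L=\{\nu\in\mathbb{R}^n:\sum_i \nu_i u_i=0\}$. Since the $u_i$ span $\mathbb{R}^3$ (they are not on a single great circle), $\dim L=n-3$. For each $i$ let $f_i\in L^*$ be the restriction of the $i$-th coordinate functional, $f_i(\nu)=\nu_i$; then $\bigcap_i\ker f_i=\{0\}$, so the $f_i$ span $L^*\cong\mathbb{R}^{n-3}$. The condition $u_i\in\overline{\mathcal{C}}(Q\setminus u_i)$, written as $-u_i+\sum_{j\neq i}\mu_j u_j=0$ with $\mu_j\geq 0$, says exactly that there is a relation $\nu\in L$ with $\nu_i<0$ and $\nu_j\geq 0$ for $j\neq i$. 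By the Farkas lemma such a $\nu$ exists if and only if $f_i\notin\overline{\mathcal{C}}(f_j:j\neq i)$. Hence $u_i\in X$ if and only if $f_i$ is an extreme generator of the cone $C:=\overline{\mathcal{C}}(f_1,\dots,f_n)\subset L^*$.

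Because $\overline{\mathcal{C}}(Q)=\mathbb{R}^3$, writing each $-u_i$ as a nonnegative combination of the $u_j$ and summing the resulting $n$ identities produces a relation $\Lambda\in L$ with all entries strictly positive; then $f_i(\Lambda)=\Lambda_i>0$ for every $i$, so all the $f_i$ lie in one open halfspace of $L^*$ and $C$ is a pointed, full-dimensional cone. I would then use the genericity hypothesis to prove that no two $f_i$ are parallel: a relation $f_i=c\,f_j$ on $L$ means $\nu_i=c\,\nu_j$ for all $\nu\in L$, i.e. $e_i-c\,e_j\perp L$, so $e_i-c\,e_j=(\langle a,u_1\rangle,\dots,\langle a,u_n\rangle)$ for some $a\in\mathbb{R}^3$; this forces $\langle a,u_m\rangle=0$ for all $m\neq i,j$, placing the $n-2\geq 3$ remaining points on a great circle, contradicting the standing assumption that no three points lie on a spherical line. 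This is exactly the step that uses $n\geq 5$.

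The count is then immediate: a pointed, full-dimensional cone in $\mathbb{R}^{n-3}$ is the conical hull of its extreme rays, and those extreme rays must span $\mathbb{R}^{n-3}$, so there are at least $n-3$ of them. Each extreme ray of $C$ is carried by some $f_i$, and since no two $f_i$ are parallel, distinct extreme rays carry distinct indices, each extreme $f_i$ being the unique generator on its ray; by the dictionary above every such index lies in $X$. Therefore $|X|\geq n-3$. The hard part, in my view, is the dualization plus the Farkas step, and above all the genericity computation showing no two $f_i$ are parallel, since this is the single place where both $n\geq 5$ and the no-three-collinear hypothesis are genuinely needed; once they are established the extremal-ray count is routine. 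A secondary point requiring care is checking the chain of equivalences relating hemispheres, cones, and relations uniformly for each $Q\setminus u_i$, which is where $n-1\geq 4$ enters.
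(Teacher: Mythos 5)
Your proof is correct, but it takes a genuinely different route from the paper's. The paper argues directly on the sphere: it uses Lemma \ref{lemma_all_R3} together with the conical Carath\'eodory theorem (Proposition \ref{proposition_caratheodory}) to extract four points, say $u_1,u_2,u_3,u_4$, that are already not on a closed hemisphere; the four open cones spanned by the triples of these points then tile $\mathbb{S}^2$, so every remaining point has its antipode in one of them and is therefore nonessential, which accounts for $n-4$ elements of $X$; a final case analysis on which of the sets $Q_1,\dots,Q_4$ is nonempty supplies the $(n-3)$-rd element. You instead perform a Gale-type dualization: you pass to the $(n-3)$-dimensional relation space $L$, translate ``$u_i$ nonessential'' via Farkas into ``$f_i$ is a redundant generator of the cone $C\subset L^*$'', show $C$ is pointed and full-dimensional using the strictly positive relation $\Lambda$, rule out parallel $f_i$'s by the no-three-collinear hypothesis (which is indeed a standing assumption the paper's own proof also invokes), and count extreme rays. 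Each step checks out, including the two places where $n\ge 5$ enters (so that $Q\setminus u_i$ has at least $4$ points for the hemisphere--cone dictionary, and so that $n-2\ge 3$ points land on a great circle in the parallelism argument). Your approach is more conceptual --- it explains why $n-3$ is the right count (it is $\dim L$) and generalizes verbatim to $\mathbb{S}^{d-1}$ --- at the cost of importing Farkas and the Minkowski--Weyl description of pointed cones, which the paper's elementary argument avoids. One practical caveat: the paper later appeals not just to the statement of this lemma but to its \emph{proof} (in the proof of Proposition \ref{proposition_lifting} and in the subsequent remark it uses the explicit four-point subconfiguration whose cones tile the sphere); your argument proves the lemma as stated but does not produce that byproduct, so if it were substituted into the paper those later references would need the Carath\'eodory extraction reinstated separately.
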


\begin{proof}
By Lemma \ref{lemma_all_R3}, $\overline{\mathcal{C}}(u_1,...,u_n) = \mathbb{R}^3$. In particular, $-u_1 \in \overline{\mathcal{C}}(u_1,...,u_n)$, i.e., there are non-negative numbers $\lambda_i$ ($1 \leq i \leq n$) such that
$$ -u_1 = \lambda_1 u_1 + \lambda_2 u_2 + ... + \lambda_n u_n,$$
i.e.,
$$ -u_1 = \mu_2 u_2 + ... + \mu_n u_n,$$
where $\mu_i = \lambda_i / (1+\lambda_1) \geq 0$. In other words, $-u_1 \in \overline{\mathcal{C}}(u_2,...,u_n)$. By Proposition \ref{proposition_caratheodory}, there are $u_i,u_j,u_k \in Q$ and non-negative numbers $\alpha_i$, $\alpha_j$ and $\alpha_k$ such that
$$ -u_1 = \alpha_i u_i + \alpha_j u_j + \alpha_k u_k.$$
Since we are assuming that there are no three spherically collinear points in $Q$, all three numbers $\alpha_i$, $\alpha_j$ and $\alpha_k$ are positive, i.e., $-u_1 \in \mathcal{C}(u_i,u_j,u_k)$.

After a relabelling of the indices, if necessary, we may assume that the points $u_i$, $u_j$ and $u_k$ are $u_2$, $u_3$ and $u_4$.

By Proposition \ref{proposition_characterization}, each one of the points $u_1$, $u_2$, $u_3$ and $u_4$ is such that its antipode is on the open cone spanned by the other points. The respective four closed cones divide the sphere into four regions. Since we are assuming that no three points of $Q$ are collinear, we have that any of the remaining points $u_5,...,u_n$ is such that its antipode is contained in one and only one of the cones $C(u_1,u_2,u_3)$, $C(u_1,u_2,u_4)$, $C(u_1,u_3,u_4)$ and $C(u_2,u_3,u_4)$.

Since the antipode of any point from $u_1$ to $u_n$ is in an open cone spanned by a triple from the four points $u_1$, $u_2$, $u_3$ or $u_4$, this means that all points from $u_5$ to $u_n$ are not essential as a cone generator. Hence all points from $u_5$ to $u_n$ are in the set $X$, as defined before.

Now, we just need to show that at least one of the points $u_1$, $u_2$, $u_3$ or $u_4$ is in $X$. Since $n \geq 5$ and no three points are (spherically) collinear, at least one of the following sets is non-empty:
$$ Q_1 = \{ u_i \in Q - \{u_1\}; -u_i \in C(u_2,u_3,u_4)\},$$
$$ Q_2 = \{ u_i \in Q - \{u_2\}; -u_i \in C(u_1,u_3,u_4)\},$$
$$ Q_3 = \{ u_i \in Q - \{u_3\}; -u_i \in C(u_1,u_2,u_4)\},$$
$$ Q_4 = \{ u_i \in Q - \{u_4\}; -u_i \in C(u_1,u_2,u_3)\}.$$
We may assume that this non-empty set is $Q_1$. This implies, by Proposition \ref{proposition_characterization}, that for some $i\in\{5,...,n\}$ the points $u_2$, $u_3$, $u_4$ and $u_i$ are not on the same hemisphere. This in turn implies that any of the remaining points (including $u_1$) is in one and only one of the four open cones spanned by each possible triple from $\{u_2,u_3,u_4,u_i\}$. Thus $u_1$ is not essential as a cone generator, i.e., $u_1 \in X$.
\end{proof}

\begin{remark}
The proof of Lemma \ref{lemma_pick_up} actually showed a stronger result: We could get rid of \textit{all points except four} at once so that the new configuration would still not be contained in a hemisphere.
\end{remark}

\begin{proof}
(of Proposition \ref{proposition_lifting}) Given a spherical polygon $Q = [u_1,u_2,...,u_n]$, we just have to show that there are positive scalars $\alpha_i$ such that the rescaled vectors $e_i = \alpha_i u_i$ sum up to zero.

The proof is on induction on the number of points $n \geq 4$. The case $n=4$ is Proposition \ref{proposition_characterization}: $-u_1 \in C(u_2,u_3,u_4)$, which implies that $1 \cdot u_1 + \alpha_2 \cdot u_2 + \alpha_3 \cdot u_3 + \alpha_4 \cdot u_4 = 0$.

Now, assume the result for $n$ points. Suppose we are given $(n+1)$ points, not all of them on the same hemisphere. By Lemma \ref{lemma_pick_up}, there is at least one point (say, $u_{n+1})$ such that the remaining points are not on the same hemisphere. By the induction hypothesis, there are positive $\lambda_i$ such that
$$ \lambda_1 \cdot u_1 + \lambda_2 \cdot u_2 + ... + \lambda_n \cdot u_n = 0.$$

By the proof of Lemma \ref{lemma_pick_up}, there are four points $u_i$, $u_j$, $u_k$ and $u_l$ (which can be assumed to be different from $u_{n+1}$) such that the four different cones divide the sphere in four regions. Since we assume that no three points of $Q$ are (spherically) collinear, we have that $-u_{n+1}$ is in one of these open cones, say $C(u_i,u_j,u_k)$. Therefore
$$ \mu_i \cdot u_i + \mu_j \cdot u_j + \mu_k \cdot u_k + 1 \cdot u_{n+1} = 0,$$
which, summing to the previous sum, gives
$$ \sum_{m=1}^{n+1} \alpha_m u_m = 0,$$
where $\alpha_m = \lambda_m + \mu_m$ for $m=i,j$ or $k$, $\alpha_{n+1}=1$ and $\alpha_m = \lambda_m$ for the remaining points.
\end{proof}

The reader might be wondering why we bothered to prove Lemma \ref{lemma_pick_up}, which is considerably stronger than what we actually used in the proof of Proposition \ref{proposition_lifting}. The reason why we need this result will become clear in the course of the proof of Theorem \ref{segre_theorem_discrete}.

In order to simplify language, we introduce the following terminology:

\begin{definition}
\label{definition_balanced}
A set of points $Q = \{u_1,...,u_n\} \subset \mathbb{S}^2 $ ($n \geq 4$), not in the same spherical line, is said to be \textbf{balanced} or in \textbf{balanced position} if its points are not in the same closed hemisphere. A point $u_i$ of a balanced set is said to be \textbf{essential} if the set $\{u_1,...,\hat{u}_i,...,u_n\}$ is not balanced. Otherwise $u_i$ is \textbf{nonessential}.
For a spherical polygon $Q = [u_1,..,u_n]$, the same definitions apply to $Q$ considered as a set of vertices.
\end{definition}

The condition of having all unit vectors $u_1,u_2,...,u_n$ in balanced position simply means that, for each $i \in \{1,...,n\}$, there is at least one triple of points $u_j$, $u_k$ and $u_l$ such that $-u_i \in \mathcal{C}(u_j,u_k,u_l)$. By Proposition \ref{proposition_characterization}, this is equivalent to
$$ [i,j,k] \simeq [i,k,l] \not \simeq [i,j,l] \simeq [j,k,l].$$

In Lemma \ref{lemma_pick_up}, however, we improved this even more: there are actually four specific points $u_i$, $u_j$, $u_k$ and $u_l$ such that any $u_m$ of the remaining points has its antipode located in one and only one of the four cones spanned by these points.

The next step is, therefore, to express the fact of a spherical polygon \textbf{not having self-intersection} as a relation of signs of determinants. Looking at figure \ref{figure_relative_position_edges} we have some possibilities regarding the relative position of two spherical edges. For the sake of simplicity of notation we assume that one (spherical) edge is $\overrightarrow{u_1 u_2}$ and the other is $\overrightarrow{u_5 u_6}$.

If edges $\overrightarrow{u_1 u_2}$ and $\overrightarrow{u_5 u_6}$ intersect, then the spherical line spanned by an edge separates the two endpoints of the other edge (see figure \ref{figure_relative_position_edges}(d)). In terms of determinants, this means that
$$ [1,2,5] \not \simeq [1,2,6] \text{ and } [1,5,6] \not \simeq [2,5,6].$$

\begin{figure}
    \centering
    \includegraphics[scale=0.2]{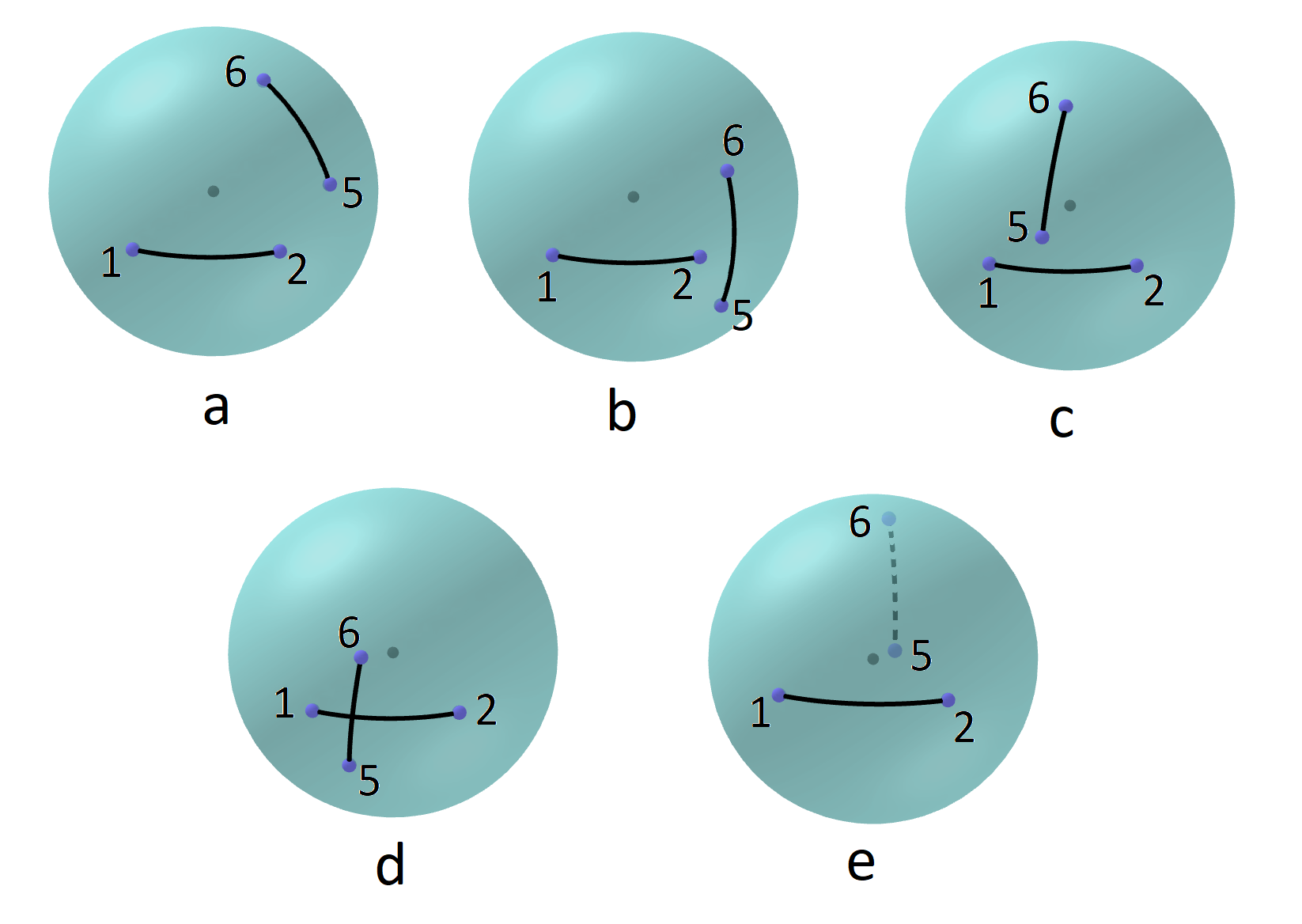}
    \caption{Possibilities regarding the relative positions of edges}
    \label{figure_relative_position_edges}
\end{figure}

Notice, however, that this relation is not exclusive to the case where both edges intersect: this relation is also true if one edge intersects the antipode of the other edge (see figure \ref{figure_relative_position_edges}(e)). In order to distinguish these two possibilities, notice that, if the edges intersect, then the spherical line spanned by $u_1$ and $u_6$ does not separate $u_2$ and $u_5$, while the spherical line spanned by $u_2$ and $u_5$ does not separate $u_1$ and $u_6$ (that would not be case if one edge intersected the antipode of the other edge).
Hence
$$ [1,6,2] \simeq [1,6,5] \text{ and } [2,5,1] \simeq [2,5,6].$$
i.e.,
$$ [1,2,6] \simeq [1,5,6] \text{ and } [1,2,5] \simeq [2,5,6].$$
Therefore, if edges $\overrightarrow{u_1 u_2}$ and $\overrightarrow{u_5 u_6}$ intersect, we have that
$$ [1,2,5] \simeq [2,5,6] \not \simeq [1,2,6] \simeq [1,5,6].$$

We have therefore proved

\begin{proposition}
\label{self_intersection_determinants}
A spherical polygon $Q \subset \mathbb{S}^2$ has a self-intersection at edges $\overrightarrow{u_i u_{i+1}}$ and $\overrightarrow{u_j u_{j+1}}$ (where $j \neq i+1$ and $i \neq j+1$) if and only if the relation
$$ [i,i+1,j] \simeq [i+1,j,j+1] \not \simeq [i,i+1,j+1] \simeq [i,j,j+1]$$
holds.
\end{proposition}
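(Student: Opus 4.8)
The plan is to establish the biconditional by proving each implication separately, using the geometric characterization of when two spherical edges cross in terms of separation by spherical lines. First I would set up the two spherical edges $\overrightarrow{u_i u_{i+1}}$ and $\overrightarrow{u_j u_{j+1}}$ and recall that, since edges are minimal spherical segments, the edge $\overrightarrow{u_j u_{j+1}}$ lies on the spherical line (great circle) spanned by $u_j$ and $u_{j+1}$, which divides $\mathbb{S}^2$ into two open hemispheres. The key observation, already developed in the discussion preceding the statement, is that a genuine intersection of the two edges (as opposed to one edge meeting the \emph{antipode} of the other) requires two things simultaneously: each edge's spherical line must separate the endpoints of the other edge, \emph{and} the two edges must lie on the same side in the complementary sense that rules out the antipodal crossing of figure \ref{figure_relative_position_edges}(e).

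For the forward direction, I would assume the edges intersect at a point $p$. The separation of $u_j,u_{j+1}$ by the line $\mathrm{span}\{u_i,u_{i+1}\}$ gives $[i,i+1,j] \times [i,i+1,j+1]$, and symmetrically the separation of $u_i,u_{i+1}$ by $\mathrm{span}\{u_j,u_{j+1}\}$ gives $[i,j,j+1] \times [i+1,j,j+1]$. These two separation conditions alone, however, are satisfied \emph{both} by a true intersection and by an intersection with the antipodal edge, so they are insufficient. To pin down the genuine crossing I would invoke the additional relations derived in the excerpt: that the line $\mathrm{span}\{u_i,u_{j+1}\}$ does \emph{not} separate $u_{i+1}$ and $u_j$, and that $\mathrm{span}\{u_{i+1},u_j\}$ does not separate $u_i$ and $u_{j+1}$. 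Translating these non-separations into sign equalities yields $[i,i+1,j+1] \simeq [i,j,j+1]$ and $[i,i+1,j] \simeq [i+1,j,j+1]$. Combining all four sign relations produces exactly the chain
$$[i,i+1,j] \simeq [i+1,j,j+1] \times [i,i+1,j+1] \simeq [i,j,j+1].$$

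For the converse, I would assume the displayed sign relation and reconstruct the intersection. The two equalities identify which pairs of determinants agree in sign and which disagree; reading them back through the separation criteria shows that each edge's spherical line separates the other edge's endpoints (giving a crossing of the two great circles), while the auxiliary sign equalities guarantee that the crossing point is the one lying on both \emph{minimal} segments rather than on an antipodal segment. The main obstacle I anticipate is precisely this disambiguation between a true crossing and the spurious antipodal crossing of figure \ref{figure_relative_position_edges}(e): the raw separation conditions are symmetric under replacing an edge by its antipode, so the proof must carefully extract the extra sign data that breaks this symmetry, and I would need to verify that the four sign relations are not merely necessary but jointly sufficient to force an actual intersection point on both segments. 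Since we assume no three of the points are spherically collinear, all determinants are nonzero, so every $\simeq$ or $\times$ relation is unambiguous, which removes the degenerate boundary cases and lets the sign bookkeeping close cleanly.
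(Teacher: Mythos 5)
Your plan follows essentially the same route as the paper: the paper's argument (given in the discussion immediately preceding the proposition) derives exactly the same two separation conditions and the same two diagonal non-separation conditions to exclude the antipodal crossing of figure \ref{figure_relative_position_edges}(e), and combines them into the stated sign chain. If anything, you are more explicit than the paper about the converse direction, which the paper leaves implicit after establishing only the forward implication.
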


\begin{example}
\label{example_case_n_4}
Let $Q = [u_1,u_2,u_3,u_4] \subset \mathbb{S}^2$ be a spherical polygon whose vertices are not entirely contained on one hemisphere. By Proposition \ref{proposition_characterization}, this is equivalent to
$$ [1,2,3] \simeq [1,3,4] \not \simeq [1,2,4] \simeq [2,3,4],$$
i.e., the cyclic sequence $\epsilon_1 = [1,2,3]$, $\epsilon_2 = [2,3,4]$, $\epsilon_3 = [3,4,1] = [1,3,4]$ and $\epsilon_4 = [4,1,2] = [1,2,4]$ has 4 sign changes. As we saw earlier, this is equivalent to the polygon $Q$ having 4 (spherical) inflections.

Moreover, $Q$ does not have self-intersections. For if it had (which would be between edges $\overrightarrow{u_1 u_2}$ and $\overrightarrow{u_3 u_4}$), then Proposition \ref{self_intersection_determinants} would imply
$$ [1,2,3] \simeq [2,3,4] \not \simeq [1,2,4] \simeq [1,3,4],$$
contradicting the previous determinant relations.

Our conclusion is that, for a spherical polygon $Q$ with 4 points, not only the condition (a) of Proposition \ref{proposition_characterization} implies the existence of 4 spherical inflections, but also the converse. Besides that, any of these two statements imply that $Q$ does not have self-intersections.
\end{example}

\begin{remark}
Our extra assumption on the spherical polygons not having three points in the same spherical line might seem redundant, since we assume the original polygon $P$ in $\mathbb{R}^3$ to be generic: if its tangent indicatrix $Q$ had three consecutive points $u_i$, $u_{i+1}$ and $u_{n+2}$ in the same spherical line, then $e_i$, $e_{i+1}$ and $e_{i+2}$ would be in the same plane, i.e., the vertices $v_i$, $v_{i+1}$, $v_{i+2}$ and $v_{i+3}$ would be in the same plane.

Notice, however, that if the tangent indicatrix had three non-consecutive points in the same spherical line, say $u_i$, $u_{i+1}$ and $u_j$, that would only mean that $v_j$ and $v_{j+1}$ are in a plane parallel to the plane generated by $v_i$, $v_{i+1}$ and $v_{i+2}$. This does not contradict the genericity of $P$.

The justification of why we can assume $Q$ to have this extra property rests on the following remark: given a spherical polygon without $Q \subset \mathbb{S}^2$, we can perturb its vertices slightly so that $Q$ will not have three (spherically) collinear vertices, but preserving at the same time not only the property of not being entirely contained in a hemisphere but also the property of not having self-intersections. Moreover, if $Q$ does not have three consecutive collinear vertices (which is the case), this perturbation can be done without altering the state of a triple of vertices of $P$ of being a flattening or not. 
\end{remark}

\section{Good vertices and proof of the Main Result}

Theorem \ref{segre_theorem_discrete} follows from the following theorem:

\begin{theorem}
\label{segre_theorem_discrete_spherical}
Let $Q=[u_1,...,u_n] \in \mathbb{S}^2$ ($n\geq 4$) be a spherical polygon in balanced position and without self-intersections. Then $Q$ has at least four spherical inflections.
\end{theorem}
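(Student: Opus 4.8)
The plan is to prove Theorem \ref{segre_theorem_discrete_spherical} by induction on the number $n$ of vertices, measuring the spherical inflections as the number of sign changes in the cyclic sequence $(\epsilon_1,\dots,\epsilon_n)$ with $\epsilon_i=[u_i,u_{i+1},u_{i+2}]$; since for a closed sequence this number is always even, it suffices to produce at least four sign changes. Invoking the perturbation remark, I may assume general position, so that every determinant of a triple of vertices is nonzero. The base case $n=4$ is exactly Example \ref{example_case_n_4}, where a balanced simple quadrilateral is shown to have precisely four inflections. For the inductive step I would take a balanced simple polygon $Q=[u_1,\dots,u_{n+1}]$ with $n+1\ge 5$ and locate a single vertex $u_i$ that can be deleted so that the smaller polygon $Q'$, obtained by joining $u_{i-1}$ directly to $u_{i+1}$, is again balanced and simple. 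The induction hypothesis (applicable since $n\ge 4$) then gives $Q'$ at least four inflections, and it remains to transfer this bound back to $Q$. I will call such a deletable vertex \emph{good}.

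The hard part, exactly the point singled out in the introduction, is to guarantee the existence of a good vertex. Keeping the balanced condition is the mild requirement that $u_i$ be nonessential in the sense of Definition \ref{definition_balanced}, and here there is ample room: Lemma \ref{lemma_pick_up}, applied to the $n+1$ vertices of $Q$, furnishes at least $(n+1)-3=n-2$ nonessential vertices. The genuine difficulty is \textbf{preserving the absence of self-intersections}: deleting $u_i$ replaces the two edges $\overrightarrow{u_{i-1}u_i}$ and $\overrightarrow{u_i u_{i+1}}$ by the single shortcut edge $\overrightarrow{u_{i-1}u_{i+1}}$, which could cross some other edge of $Q$. I would therefore argue that at least one nonessential vertex is also an \emph{ear}, meaning that its shortcut edge meets no other edge. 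The natural route is by contradiction: if deleting a nonessential $u_i$ created a crossing of $\overrightarrow{u_{i-1}u_{i+1}}$ with some edge $\overrightarrow{u_m u_{m+1}}$, Proposition \ref{self_intersection_determinants} records this as a rigid relation among the determinants $[i-1,i+1,m]$, $[i-1,i+1,m+1]$, $[i-1,m,m+1]$, $[i+1,m,m+1]$. Supposing that \emph{no} nonessential vertex is an ear forces all such crossing relations to hold at once; combining them with the cone data of the proof of Lemma \ref{lemma_pick_up} (where the antipode of each vertex sits in exactly one of the four cones spanned by a fixed ``frame'' of four points, via Proposition \ref{proposition_characterization}) and with the hypothesis that $Q$ itself is simple, I expect these sign constraints to become jointly inconsistent. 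This purely combinatorial extraction of an ear from the cone/sign data is where the subtlety lies, and it is the step I anticipate fighting hardest with.

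Finally I would verify that deleting a good vertex cannot increase the inflection count, so that $Q$ inherits the bound from $Q'$. Deleting $u_i$ affects only the three determinants $[u_{i-2},u_{i-1},u_i]$, $[u_{i-1},u_i,u_{i+1}]$ and $[u_i,u_{i+1},u_{i+2}]$ of $Q$, which are replaced in $Q'$ by the two determinants $[u_{i-2},u_{i-1},u_{i+1}]$ and $[u_{i-1},u_{i+1},u_{i+2}]$; every other determinant is unchanged, and the two flanking signs $[u_{i-3},u_{i-2},u_{i-1}]$ and $[u_{i+1},u_{i+2},u_{i+3}]$ agree in $Q$ and $Q'$. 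Hence only the sign changes inside this short window can differ, and the comparison reduces to a finite check: using the sign relations coming from the cone containing $-u_i$ and from the ear property (Proposition \ref{self_intersection_determinants}), I would confirm that the window for $Q$ (four transitions) never carries fewer sign changes than the window for $Q'$ (three transitions). This gives $\#\,\text{inflections}(Q)\ge \#\,\text{inflections}(Q')\ge 4$ and closes the induction. I expect this last bookkeeping to be routine once a good vertex is secured; the whole weight of the argument rests on the existence result of the previous paragraph.
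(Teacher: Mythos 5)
Your skeleton matches the paper's proof exactly: induction on $n$ with base case Example \ref{example_case_n_4}, deletion of a single vertex that is simultaneously nonessential (to keep $Q$ balanced, via Lemma \ref{lemma_pick_up}) and ``good'' (to keep $Q$ simple), followed by a local check that the inflection count does not drop. But the step you yourself identify as the crux --- producing a nonessential vertex whose shortcut edge $\overrightarrow{u_{i-1}u_{i+1}}$ crosses no other edge --- is not actually proved in your proposal. You only state that you ``expect'' the sign relations from Proposition \ref{self_intersection_determinants} and the cone data of Lemma \ref{lemma_pick_up} to become jointly inconsistent if no nonessential vertex is an ear. That is a hope, not an argument, and it is far from clear that a purely determinant-juggling contradiction can be extracted: the crossing relations for different candidate vertices involve different, largely unrelated triples of indices, and nothing in the cone decomposition directly constrains which edges the shortcut $\overrightarrow{u_{i-1}u_{i+1}}$ can meet.

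The paper closes this gap by a genuinely different device, which you are missing: it first proves (Lemma \ref{lemma_resulting_polygon}) that a balanced simple polygon has \emph{at least four} good vertices, by triangulating each of the two regions the polygon bounds using only vertices of $Q$ (possible because balancedness plus Lemma \ref{lemma_all_R3} and Proposition \ref{proposition_caratheodory} let every point of $\mathbb{S}^2$ be written as a conical combination of at most three vertices), observing that the dual graph of each such triangulation is a tree and hence has at least two leaves, and noting that each leaf triangle exhibits an ``ear'', i.e.\ a good vertex. Combined with the $\ge n-3$ nonessential vertices of Lemma \ref{lemma_pick_up}, the pigeonhole principle gives a vertex that is both good and nonessential. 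You should either adopt this two-ears argument or supply a complete combinatorial proof of your claimed inconsistency; as written, the induction step does not go through. (Your final ``window'' comparison is also lighter than it looks: the two windows share endpoint signs and hence have the same parity of sign changes, but ruling out the case where the shorter window has strictly more sign changes genuinely requires the goodness and nonessentiality of $u_i$ --- the paper handles this by an explicit case analysis of the local configurations --- so ``routine bookkeeping'' undersells what remains to be checked.)
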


The proof will need some lemmas. Given a spherical polygon $Q$ and any of its vertices $u_i$, denote by $Q-u_i$ the polygon $[u_1,...,\hat{u_i},...,u_n]$, obtained from $Q$ by deleting the vertex $u_i$ along with the edges $[u_{i-1},u_i]$ and $[u_i,u_{i+1}]$, and adding the edge $[u_{i-1},u_{i+1}]$ to connect vertices $u_{i-1}$ and $u_{i+1}$.

\begin{definition}
A spherical polygon $Q = [u_1,...,u_n]$ is \textbf{simple} if it does not have self-intersections. A vertex $u_i$ is said to be \textbf{good} if the spherical polygon $Q - u_i$ is simple. Otherwise $u_i$ is said to be \textbf{bad}.
\end{definition}

\begin{lemma}
\label{lemma_resulting_polygon}
Let $Q = [u_1,u_2,...,u_n]$ be a balanced, simple spherical polygon, ($n\geq 4$). Then the set
$$ Y = \{u_i \in Q; u_i \text{ is good}\}$$
has at least four elements.
\end{lemma}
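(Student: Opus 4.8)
The plan is to pass to the two regions cut out by $Q$ and to count ``ears'' on each side. Since $Q$ is simple, the spherical Jordan curve theorem splits $\mathbb{S}^2$ into two open regions $A$ and $B$, each a topological disk with $\partial A = \partial B = Q$. I would first reformulate goodness geometrically: deleting $u_i$ replaces the two edges at $u_i$ by the single minimal geodesic $d_i = [u_{i-1},u_{i+1}]$, so $u_i$ is good exactly when $d_i$ meets no other edge of $Q$, i.e. when $d_i$ lies entirely in one of the two open regions. This yields a clean dichotomy: every good vertex is either \emph{$A$-good} (with $d_i \subset A$) or \emph{$B$-good} (with $d_i \subset B$). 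Because a single arc $d_i$ cannot lie in both open regions --- and cannot lie on $Q$, since no three vertices are spherically collinear --- the sets of $A$-good and $B$-good vertices are \textbf{disjoint}, so it will suffice to produce two ears on each side and add them. I would also record the consequence of the balanced hypothesis that neither region can lie in a closed hemisphere (its closure contains $Q$, and $Q$ meets every great circle); in particular \emph{neither region is geodesically convex}, which rules out the degenerate situation in which all diagonals fall on one side.

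To produce the ears I would adapt the classical Two Ears Theorem. Triangulate the closed disk $\overline{A}$ by minimal-geodesic diagonals joining vertices of $Q$; such a triangulation has $n-2$ triangles, and its dual graph is a tree, which for $n \geq 4$ has at least two leaves. Each leaf is an ear: a triangle $u_{i-1}u_iu_{i+1}$ whose diagonal $[u_{i-1},u_{i+1}]$ is an interior minimal geodesic of $A$, so its tip $u_i$ is $A$-good. Running the identical argument on $\overline{B}$ produces two $B$-good vertices, and by the disjointness above these four vertices are distinct, giving $|Y| \geq 4$.

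The main obstacle --- and the only genuinely spherical point --- is to justify that each region admits a triangulation by \emph{minimal} geodesic diagonals, i.e. that the ear diagonals $[u_{i-1},u_{i+1}]$ can be realized as interior minimal geodesics. In the plane this is automatic, and on $\mathbb{S}^2$ it would also be automatic for a region contained in an open hemisphere (via gnomonic projection, which sends that hemisphere to a plane and minimal geodesics to straight segments). The difficulty is that, precisely because $Q$ is balanced, \emph{both} regions fail to lie in any closed hemisphere, so the global planar reduction is unavailable and the minimal geodesic between two vertices of a region may cut across the complementary region. I would therefore argue realizability locally rather than globally: at each vertex the interior angle is below $\pi$ on exactly one of the two sides, and I would show that a convex vertex whose two incident minimal-geodesic edges bound an empty spherical triangle lying inside its region is a genuine geodesic ear (using a gnomonic chart centered at that vertex to linearize the local picture). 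The ear extraction then proceeds, exactly as in the planar Two Ears Theorem, by peeling off such a minimal empty triangle and inducting on $n$, checking at each step that the removed diagonal remains an interior minimal geodesic.

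The subtle bookkeeping is to guarantee that enough convex ears survive on each side. I expect to control this through the spherical Gauss--Bonnet relation $\sum_i(\pi-\theta_i) = 2\pi - \mathrm{Area}(A)$ for region $A$ (with the reflex angles $2\pi-\theta_i$ on the $B$ side), which quantifies how the convex vertices are distributed between the two regions; the key structural fact is that a vertex that is reflex for $A$ is convex for $B$, so a deficit of ears on one side is compensated by candidate ears on the other, and the grand total never drops below four. Reconciling this compensation with the realizability argument above is the heart of the proof, and is where I would expect to spend the most care.
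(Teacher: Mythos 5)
Your proof is essentially the paper's: both split the sphere into the two Jordan regions of $Q$, triangulate each region by geodesic diagonals joining vertices of $Q$, and extract two ears per region as the two leaves of the dual tree, the two pairs being automatically distinct because their diagonals lie in different regions. The only divergence is in how the existence of the triangulation is justified: the paper uses the balanced hypothesis (via Lemma \ref{lemma_all_R3}, which covers the sphere by triangles on vertices of $Q$) together with a short greedy construction, rather than the local gnomonic-chart and Gauss--Bonnet bookkeeping you propose, which is heavier than needed once the dual-tree argument is available.
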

\begin{proof}
Since $Q$ is simple, it divides the sphere $\mathbb{S}^2$ into two disjoint, open regions $R_1$ and $R_2$. The fact that $Q$ is balanced implies, by Lemma \ref{lemma_all_R3}, that for any point $u$ of $\mathbb{S}^2$, $u$ can be expressed as a non-negative combination of vertices of $Q$ (at most three of them, by Proposition \ref{proposition_caratheodory}), i.e., $u$ in the inside of the triangle spanned by vertices $u_i$, $u_j$ and $u_k$ of $Q$.

Therefore, the sphere can be subdivided into triangles whose vertices are the vertices of $Q$. Choose any such triangulation $T$ of the sphere whose triangles are entirely contained either in $R_1 \cup Q$ or $R_2 \cup Q$. Such triangulations always exist in this case (see figure \ref{figure_triangulations}). (For instance, for vertex $u_1$, connect to it all other vertices $u_i$ such that the  spherical segment $[u_1,u_i]$ (i.e., the segment that minimizes distance between the points) only intersects $Q$ at $u_1$ and $u_i$; then connect $u_2$ to all other vertices $u_i$ such that the spherical segment $[u_2,u_i]$ does not intersect $Q$ and the previous added segments, except of course at the vertices of $Q$; and so on.) Notice that, since $n \geq 4$, this triangulation has at least 4 triangles.

For the triangulation $T$ restricted to the region $R_1$ (denoted by $T_1)$, consider its dual graph $G_1$ (a triangle $\triangle_1 \in T$ is considered a vertex and is connected to another triangle $\triangle_2$ if both have a common edge which is not in $Q$). Since the triangulation only uses triangles with vertices in $Q$, then the dual graph $G_1$ is a \textbf{tree}, i.e., it is connected and does not have cycles. By a basic Theorem in Graph Theory, such a graph (provided it has at least two vertices, which is the case), has at least two \textbf{leaves}, i.e., 2 vertices adjacent to only one other vertex (see figure \ref{figure_triangulations}).

In terms of the triangulation $T_1$, this means that there are two triangles $\triangle_1$ and $\triangle_2$ in $T_1$ with only one edge in the relative interior of the region $R_1$. For $\triangle_1$, let $u_i$ be the vertex adjacent to the edges of $\triangle_1$ that are contained in $Q$. Since the edge $[u_{i-1},u_{i+1}]$ of $\triangle_1$ is entirely contained in $R_1$, this means in particular that it does not intersect $Q$ at any other edge, i.e., $Q - u_i$ is simple. In other words, $u_i$ is good. By the same argument applied to $\triangle_2$, we obtain another good vertex $u_j$.

Proceeding analogously to the triangulation $T$ restricted to the region $R_2$, we obtain other two good vertices.
\end{proof}

\begin{figure}
    \centering
    \includegraphics[scale=0.2]{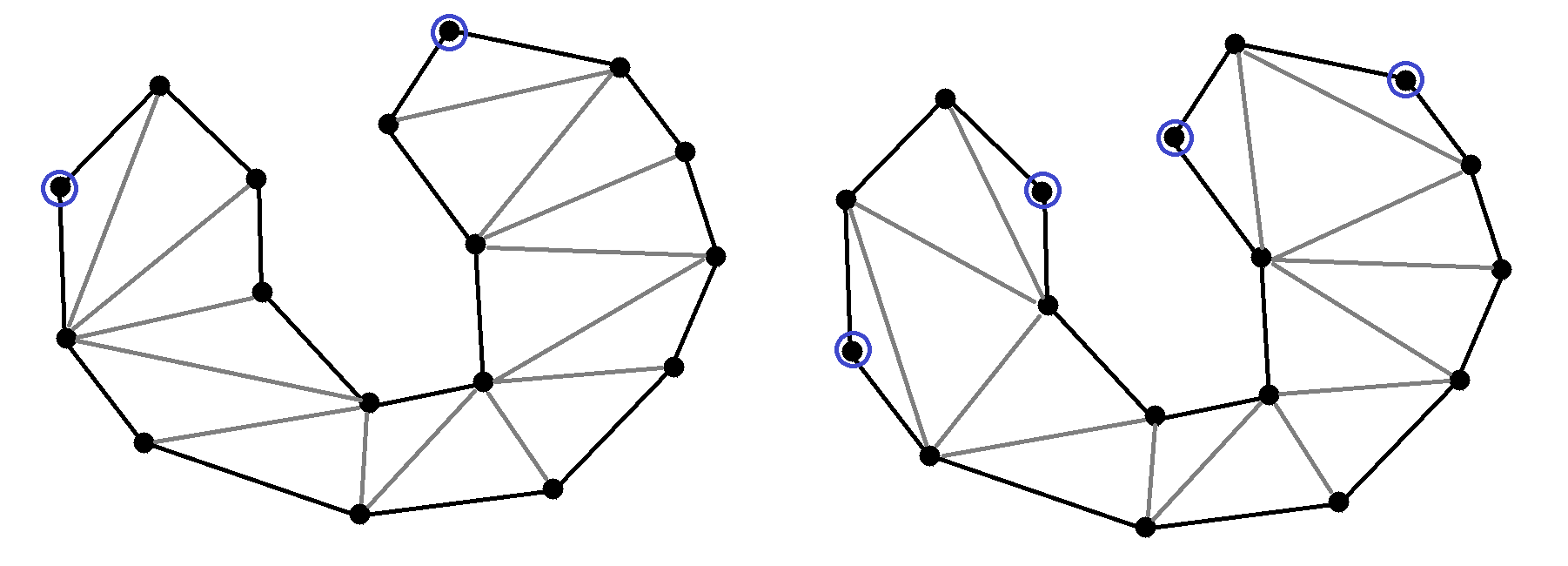}
    \caption{Two possible triangulations for a region determined by a spherical polygon (again, we represent such objects on the plane in order to aid visualization). Notice that different triangulations might lead to different sets of good vertices. Our argument, however, guarantees that for any triangulation there will always be at least 2 such vertices per region.}
    \label{figure_triangulations}
\end{figure}

Figure \ref{figure_balanced_hypothesis_necessary} shows that the balanced position hypothesis on the spherical polygon is necessary, even for a large number of vertices.

\begin{figure}
    \centering
    \includegraphics[scale=0.25]{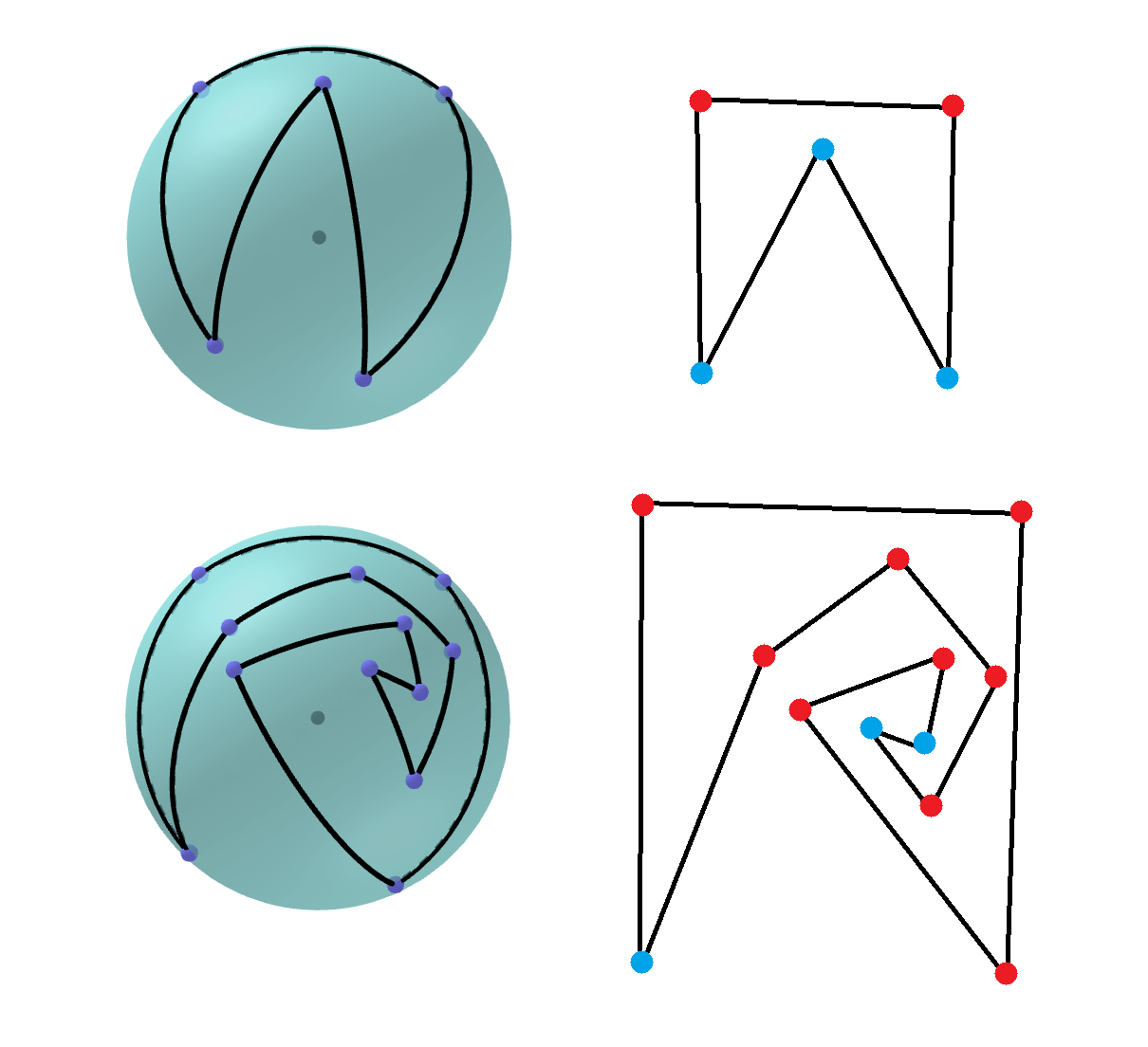}
    \caption{Two examples showing that the balanced position hypothesis is necessary for Lemma \ref{lemma_resulting_polygon} to be true. On the left the spherical polygons, on the right their planar version to aid visualization. The vertices in blue are good, while the ones in red are bad.}
    \label{figure_balanced_hypothesis_necessary}
\end{figure}

\begin{lemma}
\label{lemma_special_vertex}
Given a balanced, simple spherical polygon $Q=[u_1,u_2,...,u_n]$, $n\geq 5$, there is at least one good, nonessential vertex $u_i$.
\end{lemma}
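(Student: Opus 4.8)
The plan is to obtain the desired vertex purely by a counting argument, combining the two preceding lemmas. The key observation is that Lemma \ref{lemma_pick_up} and Lemma \ref{lemma_resulting_polygon} each single out a large set of vertices of $Q$ enjoying a good property, and since both sets live inside the same $n$-element vertex set, a pigeonhole (inclusion--exclusion) bound forces them to overlap.

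First I would identify the relevant sets. By Definition \ref{definition_balanced}, a vertex $u_i$ is nonessential precisely when $Q-u_i = \{u_1,\dots,\hat u_i,\dots,u_n\}$ remains balanced, i.e. is not contained in any closed hemisphere; thus the set of nonessential vertices coincides exactly with the set $X$ of Lemma \ref{lemma_pick_up}. Since $Q$ is balanced and $n \geq 5$, that lemma applies and yields $|X| \geq n-3$. Next, since $Q$ is balanced, simple, and $n \geq 5 \geq 4$, Lemma \ref{lemma_resulting_polygon} gives that the set $Y$ of good vertices satisfies $|Y| \geq 4$.

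Finally, both $X$ and $Y$ are subsets of the $n$-element vertex set $\{u_1,\dots,u_n\}$, so $|X \cup Y| \leq n$, and by inclusion--exclusion
$$ |X \cap Y| \;\geq\; |X| + |Y| - |X \cup Y| \;\geq\; (n-3) + 4 - n \;=\; 1. $$
Hence $X \cap Y \neq \emptyset$, and any vertex in this intersection is simultaneously good and nonessential, which is precisely the conclusion. I do not anticipate a genuine obstacle, since the substantive work has already been done in Lemmas \ref{lemma_pick_up} and \ref{lemma_resulting_polygon}; the only points requiring care are to verify that the hypotheses of both lemmas ($Q$ balanced and simple, $n\geq 5$) are in force and that the set $X$ from Lemma \ref{lemma_pick_up} is literally the set of nonessential vertices, after which the inequality $|X|+|Y|>n$ closes the argument.
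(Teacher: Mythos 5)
Your proposal is correct and is essentially identical to the paper's own proof: both combine the bound $|X|\geq n-3$ from Lemma \ref{lemma_pick_up} with the bound $|Y|\geq 4$ from Lemma \ref{lemma_resulting_polygon} and conclude by pigeonhole that $X\cap Y\neq\emptyset$. Your extra care in checking the hypotheses and in identifying $X$ with the set of nonessential vertices is sound but adds nothing the paper does not already implicitly do.
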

\begin{proof}
By Lemma \ref{lemma_pick_up}, the set
$$ X = \{u_i \in Q; u_i \text{ is nonessential} \}$$
has at least $n-3$ elements. By Lemma \ref{lemma_resulting_polygon}, the set
$$ Y = \{u_i \in Q; u_i \text{ is good}\} $$
has at least four elements. Therefore, the set $X \cap Y$ has at least one element, i.e., there is at least one good, nonessential vertex.
\end{proof}

\begin{lemma}
\label{lemma_inflections_not_increase}
Given a simple spherical polygon $Q$, let $u_i$ be a good vertex of $Q$. Then the number of spherical inflections of $Q$ is greater or equal to the number of spherical inflections of the resulting spherical polygon $Q-u_i$.
\end{lemma}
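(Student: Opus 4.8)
The plan is to reduce the statement to a purely local comparison of sign changes, and then to extract the sign data I need from the hypotheses \emph{good} and \emph{simple}. Throughout, write $p=u_{i-2}$, $a=u_{i-1}$, $x=u_i$, $b=u_{i+1}$, $q=u_{i+2}$, and recall that the number of spherical inflections equals the number of sign changes in the cyclic sequence $(\epsilon_1,\dots,\epsilon_n)$, where $\epsilon_j=[u_j,u_{j+1},u_{j+2}]$. First I would observe that a term $\epsilon_j$ involves $u_i$ if and only if $j\in\{i-2,i-1,i\}$, so the only determinants that can change are $\epsilon_{i-2}=[p,a,x]$, $\epsilon_{i-1}=[a,x,b]$, $\epsilon_i=[x,b,q]$; in $Q-u_i$ these three consecutive entries are replaced by the two consecutive entries $\epsilon'_{i-2}=[p,a,b]$ and $\epsilon'_{i-1}=[a,b,q]$, while every other $\epsilon_j$ is literally unchanged (indices read cyclically). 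Hence it suffices to compare the sign changes inside the window $(\epsilon_{i-3},\epsilon_{i-2},\epsilon_{i-1},\epsilon_i,\epsilon_{i+1})$ with those inside $(\epsilon_{i-3},\epsilon'_{i-2},\epsilon'_{i-1},\epsilon_{i+1})$, since the contributions of the rest of the cycle coincide for $Q$ and $Q-u_i$.

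For the combinatorial core, set $\alpha=s[p,a,x]$, $\beta=s[a,x,b]$, $\gamma=s[x,b,q]$, $\delta=s[p,a,b]$, $\zeta=s[a,b,q]$, and $A=s(\epsilon_{i-3})$, $B=s(\epsilon_{i+1})$. I claim it is enough to prove the two ``sandwich'' relations $\delta\in\{\alpha,\beta\}$ and $\zeta\in\{\beta,\gamma\}$; the only real content is that \emph{if} $\alpha=\beta$ \emph{then} $\delta$ equals this common sign, and likewise on the other side. Granting them, write each inflection as the indicator of a sign change between consecutive terms and use the triangle inequality $[\,s\neq s''\,]\le[\,s\neq s'\,]+[\,s'\neq s''\,]$. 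A short check over the four possibilities $(\delta,\zeta)\in\{\alpha,\beta\}\times\{\beta,\gamma\}$ then gives in every case that the window for $Q-u_i$ has at most as many sign changes as that for $Q$: each new transition $A\!\to\!\delta$, $\delta\!\to\!\zeta$, $\zeta\!\to\!B$ is dominated by the corresponding sum of old transitions through the intermediate signs $\alpha,\beta,\gamma$. This establishes $\#\{\text{inflections of }Q-u_i\}\le\#\{\text{inflections of }Q\}$.

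The hard part is proving the sandwich relations, and this is where the hypotheses genuinely enter. Since $Q$ is simple, no two non-adjacent edges among $[p,a],[a,x],[x,b],[b,q]$ cross; since $u_i$ is good, the new edge $[a,b]$ crosses nothing in $Q-u_i$. I would translate each of these non-crossings into its determinant form via Proposition \ref{self_intersection_determinants}, and combine them with the three-term Grassmann--Plücker syzygy obtained by projecting out the common vertex $a$ (the relation among the determinants $[a,\cdot,\cdot]$ built from $p,x,b,q$). The geometric picture guiding this is that goodness makes the spherical triangle $axb$ an \emph{ear}, whose interior meets neither $Q$ nor the points $p,q$; this forces $p$ and $q$ onto prescribed sides of the great circle through $a$ and $b$, which is exactly the information carried by $\delta$ and $\zeta$. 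I expect this to be the subtlest step: a naive orientation argument does not suffice, because there are sign assignments compatible with the Plücker syzygy that violate the sandwich relations, and it is precisely the absence of self-intersections that excludes them. The crux, therefore, is to verify that the crossing-free conditions rule out every such rogue configuration, leaving only the two sandwich patterns needed above.
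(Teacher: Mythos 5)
Your reduction to the local window of the cyclic sign sequence is correct, and so is the observation that the two sandwich relations $\delta\in\{\alpha,\beta\}$ and $\zeta\in\{\beta,\gamma\}$ suffice, via the triangle inequality for sign-change indicators; the relations themselves are also true. The gap is exactly the step you flag as the crux: they do \emph{not} follow from the local determinant data you propose to feed into Proposition \ref{self_intersection_determinants} together with a Grassmann--Pl\"ucker syzygy. Concretely, in a planar model take $a=(0,0)$, $x=(1,0)$, $p=(-0.1,\,0.1)$, $b=(-5,\,1)$. Then $s[p,a,x]=s[a,x,b]=+$ while $s[p,a,b]=-$, so the first sandwich relation fails; yet the segments $[p,a]$ and $[x,b]$ do not cross (the line through $p$ and $a$ meets $[x,b]$ only at a point beyond $p$), and this is the only non-adjacent pair among the edges $[p,a],[a,x],[x,b]$. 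Every sign condition extractable from Proposition \ref{self_intersection_determinants} applied to the edges incident to the deleted vertex is satisfied by this configuration, so no algebraic manipulation of those local signs can rule it out.

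What excludes the rogue configuration is global, not local: one checks that $\alpha=\beta\neq\delta$ together with the non-crossing of $[p,a]$ and $[x,b]$ forces $p=u_{i-2}$ to lie in the interior of the spherical triangle $u_{i-1}u_iu_{i+1}$, and a vertex of $Q$ inside that triangle is impossible because the closed polygon, continuing from that vertex, must eventually leave the triangle and hence cross either $[u_{i-1},u_i]$ or $[u_i,u_{i+1}]$ (contradicting simplicity of $Q$) or the new edge $[u_{i-1},u_{i+1}]$ (contradicting goodness of $u_i$). This is precisely the argument the paper uses to exclude the configuration of Figure \ref{figure_possible_impossible}(b). Once that ``no vertex in the ear'' fact is established -- and it genuinely requires the closedness of $Q$ and the full simplicity of $Q$ and $Q-u_i$, not just the determinants of $p,a,x,b,q$ -- your sandwich relations follow and your sign-change bookkeeping closes the proof, arguably more cleanly than the paper's case-by-case inspection of Figures \ref{figure_three_possible_simple_cases} and \ref{figure_seven_possible_not_simple_cases}. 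So the proposal is salvageable, but you must supply that global topological step explicitly; as written, the intended derivation cannot succeed.
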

\begin{proof}
Given $u_i \in Q$, the polygon $Q - u_i$ will be formed by deleting $u_i$ along with the (spherical) edges $[u_{i-1},u_i]$ and $[u_i,u_{i+1}]$ from $Q$, and by adding the edge $[u_{i-1},u_{i+1}]$. Figure \ref{figure_possible_impossible} depicts two of the many possibilities (we represent them on the plane instead of the sphere to aid visualization).

If $u_i$ is the vertex of the conclusion of Lemma \ref{lemma_special_vertex}, then the situation of figure \ref{figure_possible_impossible} (b) cannot happen: if at least one of the vertices $u_{i-2}$ and $u_{i+2}$ were in the inside of the spherical triangle formed by the vertices $u_{i-1}$, $u_i$ and $u_{i+1}$, then $Q$ would either have a self-intersection (which is impossible by hypothesis) or $Q - u_i$ would have a self-intersection (which is not true due to the choice of $u_i$).

Therefore, all possible possibilities are, up to symmetry, the ones represented in figures \ref{figure_three_possible_simple_cases} and \ref{figure_seven_possible_not_simple_cases} (again, we represent these configurations on the plane instead of the sphere). Denoting by $d_i(x)$ the number of spherical inflections of $Q$ minus the number of spherical inflections of $Q-u_i$ in configuration $(x)$, we see that $d_i(a)=0$, $d_i(b)=0$, $d_i(c)=+2$, $d_i(d)=0$, $d_i(e)=+2$, $d_i(f)=0$, $d_i(g)=+2$, $d_i(h)=0$, $d_i(i)=+2$ and $d_i(j)=+4$. Since all these numbers are either positive or zero, the lemma is proved.
\end{proof}

\begin{figure}
    \centering
    \includegraphics[scale=0.4]{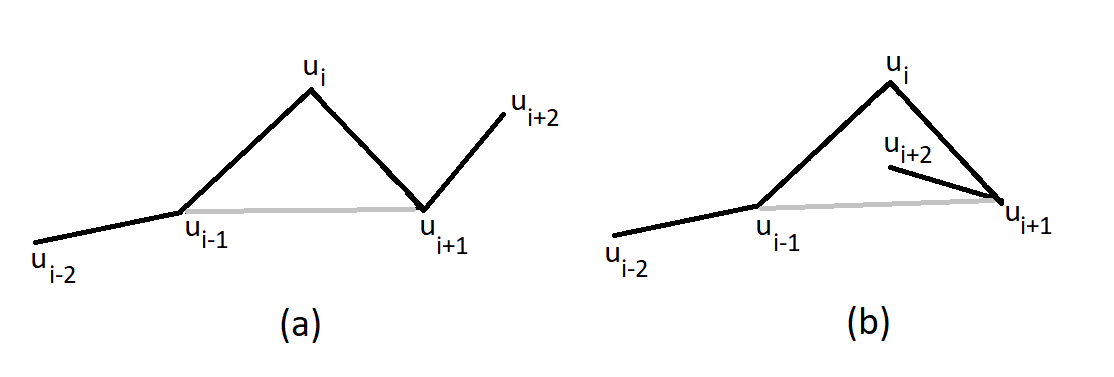}
    \caption{Two of many possibilities}
    \label{figure_possible_impossible}
\end{figure}

\begin{figure}
    \centering
    \includegraphics[scale=0.4]{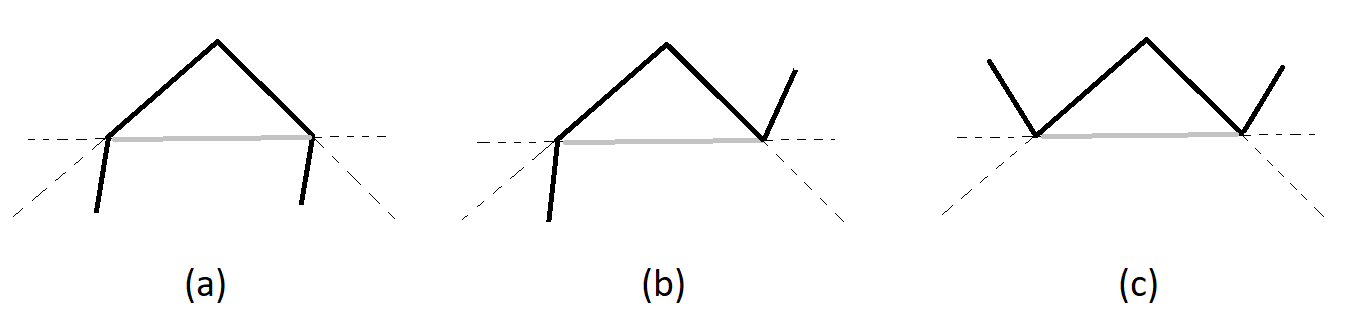}
    \caption{Three possible simple cases}
    \label{figure_three_possible_simple_cases}
\end{figure}

\begin{figure}
    \centering
    \includegraphics[scale=0.35]{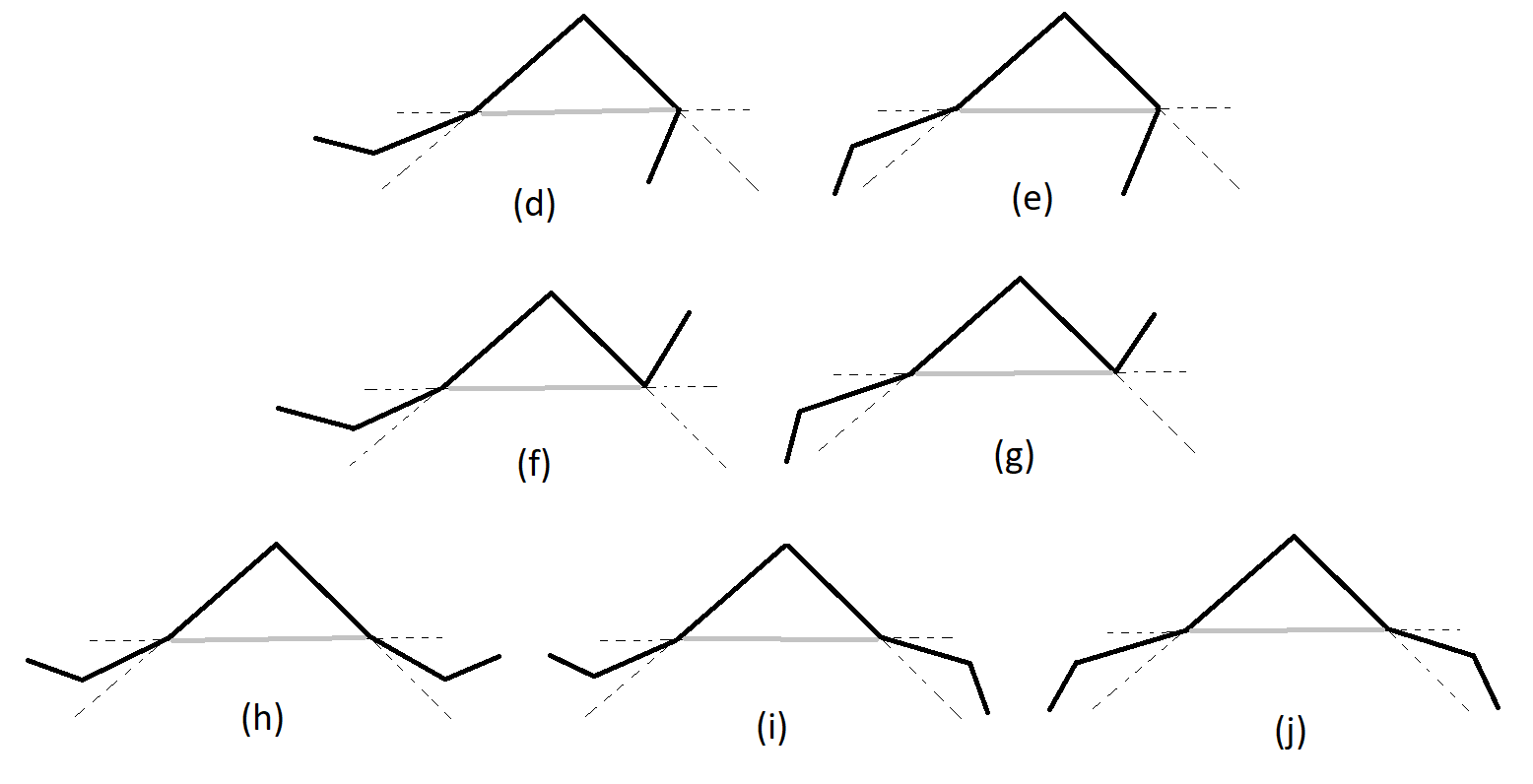}
    \caption{Seven possible cases in which at least one of the adjacent edges might change its condition of being an inflection or not.}
    \label{figure_seven_possible_not_simple_cases}
\end{figure}

\begin{proof}
(of Theorem \ref{segre_theorem_discrete_spherical}) The proof in on induction on the number of vertices of $Q$. The case $n=4$ is Example \ref{example_case_n_4}, for which the result is valid.

Assume that the result holds for spherical polygons with $n$ points. Suppose we are given a spherical polygon $Q$ with $n+1$ points.

By Lemma \ref{lemma_special_vertex}, there is at least one point $u_i$ so that the resulting polygon $Q - u_i$ is balanced and simple. By Lemma \ref{lemma_inflections_not_increase}, the number of inflections of $Q$ is greater or equal to the number of inflections of $Q-u_i$. By the induction hypothesis, however, the number of inflections of $Q-u_i$ is greater or equal to four.
\end{proof}

\section{Applications to Spherical Polygons}

 Besides the original Segre's Theorem for spherical curves, there are in the literature other interesting results regarding smooth curves. Among these results we have the \textbf{Tennis Ball Theorem} and a theorem by Möbius on smooth projective curves (which can be formulated through the notion of spherical \textit{centrally symmetric curves}). In \cite{Ovsienko_article} and \cite{Ovsienko_book}, Ovsienko and Tabachnikov state discrete analogs of these theorems as Conjectures, adding that it would be interesting to find discrete proofs of these results.
 
 Since both of these theorems follow from Segre's Theorem, while our proof of the latter result is entirely discrete, our approach follows the outline set by Ovsienko and Tabachnikov. Before stating and proving these results, we need a preliminary remark.

\begin{remark}
\label{planar_is_always_balanced}
We assume the following \textbf{convention}: a simple spherical polygon $Q$ which is contained in a spherical line will be considered \textbf{balanced} and all its edges will be considered \textbf{spherical inflections}. Note that Definition \ref{definition_balanced} does not apply here since we assumed then that the points of $Q$ would not be in the same spherical line.

The reason for this convention is that such a spherical polygon can always be realized as a tangent indicatrix of a planar polygon $P$ in $\mathbb{R}^3$. Definition \ref{definition_balanced} could be phrased in terms of \textbf{open} hemispheres instead of \textbf{closed} ones in order to contain the planar case, but the proofs involving this alternative notion would always require some argument of perturbation of hemispheres.

Moreover, since the notion of inflection we use is related to the change of signs of the cyclic sequence of determinants, it is a way to mimic the smooth idea of the torsion going from negative to positive (or vice-versa), i.e., passing through zero. For a planar spherical polygon, all determinants $[u_i,u_{i+1},u_{i+2}]$ are zero, hence it is reasonable to consider all edges as inflections.
\end{remark}

As a first application of Theorem \ref{segre_theorem_discrete_spherical} we have the following result:

\begin{theorem}
\label{discrete_tennis_ball_theorem}
(Discrete Tennis Ball Theorem) If a spherical, simple polygon $Q = [u_1,...,u_n]$ ($n \geq 4$) divides the sphere into two regions with the same area, then $Q$ has at least 4 spherical inflections.
\end{theorem}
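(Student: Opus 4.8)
The plan is to reduce the statement to Theorem \ref{segre_theorem_discrete_spherical} by showing that the equal-area hypothesis forces $Q$ to be in balanced position; once this is established, since $Q$ is simple by assumption, the conclusion follows immediately from that theorem. In outline: dispose of the case in which $Q$ lies in a spherical line, then prove that otherwise a simple polygon bisecting the area cannot be contained in any closed hemisphere.

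First I would dispose of the degenerate case in which $Q$ is contained in a spherical line. By the convention of Remark \ref{planar_is_always_balanced}, every edge of such a polygon counts as a spherical inflection, and since $n \geq 4$ there are at least four of them; so there is nothing to prove in this case.

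The core of the argument is the remaining case, where $Q$ is not contained in any spherical line, for which I claim $Q$ must be balanced. I would argue by contradiction: suppose $Q$ is contained in some closed hemisphere $H$, with bounding great circle $C = \partial H$ and opposite open hemisphere $H^{\circ}$. Since $Q$ is simple it separates $\mathbb{S}^2$ into two open regions $R_1$ and $R_2$, with $Q = \partial R_1 = \partial R_2$. The open hemisphere $H^{\circ}$ is connected and disjoint from $Q \subseteq H$, so it lies entirely within one of the regions, say $R_2$; thus $\mathrm{area}(R_2) \geq \mathrm{area}(H^{\circ}) = 2\pi$. To make this inequality strict, I would use that $Q$ is not contained in $C$: there is a point $p$ of $Q$ lying in $H \setminus C$. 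Because $Q$ is the common boundary of $R_1$ and $R_2$, every neighborhood of $p$ meets $R_2$, and since $p \in H \setminus C$ these points can be taken in the open set $H \setminus C$; hence $R_2 \cap (H \setminus C)$ is a nonempty open set of positive area. As $H \setminus C$ is disjoint from $H^{\circ}$, this gives $\mathrm{area}(R_2) \geq \mathrm{area}(H^{\circ}) + \mathrm{area}(R_2 \cap (H \setminus C)) > 2\pi$, so that $\mathrm{area}(R_1) = 4\pi - \mathrm{area}(R_2) < 2\pi < \mathrm{area}(R_2)$, contradicting the equal-area hypothesis. This shows $Q$ lies in no closed hemisphere, i.e. $Q$ is balanced.

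Having shown that $Q$ is balanced and simple, I would conclude by invoking Theorem \ref{segre_theorem_discrete_spherical}, which guarantees at least four spherical inflections. The only delicate point in this plan is the strictness of the area estimate: one must be sure that containment in a closed hemisphere genuinely costs area, which is exactly where the assumption that $Q$ is not a subset of a single great circle enters. The reduction is otherwise routine and mirrors the classical proof of the smooth Tennis Ball Theorem, where a bisecting curve cannot be squeezed into a hemisphere.
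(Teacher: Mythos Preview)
Your proposal is correct and follows essentially the same route as the paper's own proof: dispose of the degenerate great-circle case via Remark \ref{planar_is_always_balanced}, then show by an area argument that a non-planar simple polygon bisecting the sphere cannot lie in any closed hemisphere, and conclude via Theorem \ref{segre_theorem_discrete_spherical}. The only difference is cosmetic: the paper argues that the region enclosed by $Q$ inside $H$ has area strictly less than $2\pi$, whereas you argue the complementary region has area strictly greater than $2\pi$; your treatment of the strict inequality is in fact a bit more carefully justified than the paper's.
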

\begin{proof}
If $Q$ is a spherical line, then the result follows by Remark \ref{planar_is_always_balanced}.

Suppose now that $Q$ is not a spherical line. Since $Q$ is simple, it suffices by Theorem \ref{segre_theorem_discrete_spherical} to show that $Q$ is balanced. If it were not balanced, then $Q$ would be contained in a closed hemisphere $H$. Hence one of the two regions $R_1$ and $R_2$ determined by $Q$ would be contained in $H$ (say $R_1 \subset H$). Since $Q$ is not planar, $R_1 \neq H$ and therefore $area(R_1) < area(H) = 2 \pi$, contrary to hypothesis that $area(R_1) = area(R_2) = 2 \pi$.
\end{proof}

Before our second application of Theorem \ref{segre_theorem_discrete_spherical}, we need a definition:

\begin{definition}
For a set $X \subset \mathbb{R}^d$, define $-X$ as $-X = \{-x; x \in X\}$. We say that $X$ is \textbf{centrally symmetric} if $-X = X$.
\end{definition}

\begin{proposition}
\label{proposition_centrally_symmetric}
Let $Q$ be a simple, centrally symmetric spherical polygon. Then

(a) $Q$ is balanced.

(b) $Q$ divides the sphere into two regions with the same area.
\end{proposition}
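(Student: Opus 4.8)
The plan is to exploit two structural facts about a centrally symmetric simple spherical polygon $Q$: that $-Q = Q$ as sets, and that the antipodal map $a:\mathbb{S}^2\to\mathbb{S}^2$, $a(u)=-u$, is a fixed-point-free, area-preserving involution carrying $Q$ onto itself. Part (a) will follow almost immediately from the symmetry, while part (b) reduces to showing that $a$ \emph{swaps} the two complementary regions of $Q$ rather than fixing each of them.

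For (a), I would argue by contradiction, together with the convention of Remark \ref{planar_is_always_balanced}. If $Q$ lies in a spherical line it is balanced by that convention, so assume it does not. Suppose $Q$ is contained in a closed hemisphere $H=\{u\in\mathbb{S}^2:\langle u,n\rangle\ge 0\}$ for some unit vector $n$. Applying $a$ and using $-Q=Q$ gives $Q=-Q\subseteq -H=\{u:\langle u,n\rangle\le 0\}$, hence $Q\subseteq H\cap(-H)=\{u:\langle u,n\rangle=0\}$, which is a spherical line. This contradicts our standing assumption, so $Q$ meets every great circle and is therefore balanced.

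For (b), since $Q$ is simple it divides $\mathbb{S}^2$ into two open Jordan regions $R_1,R_2$, each a topological disk, with $\text{area}(R_1)+\text{area}(R_2)=4\pi$. Because $a(Q)=Q$, the homeomorphism $a$ permutes the pair $\{R_1,R_2\}$; if $a(R_1)=R_2$, then the area-preservation of $a$ yields $\text{area}(R_1)=\text{area}(R_2)=2\pi$ and we are done. The hard part will be ruling out the remaining possibility, namely $a(R_1)=R_1$ and $a(R_2)=R_2$.

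To settle this obstacle I would descend to the projective plane via the double cover $q:\mathbb{S}^2\to\mathbb{RP}^2$. Since $a$ acts freely on the circle $Q$, the image $\bar Q=q(Q)$ is an embedded simple closed curve and $q|_Q:Q\to\bar Q$ is its connected double cover; the connectedness of $Q$ then forces $\bar Q$ to be \emph{non-contractible} in $\mathbb{RP}^2$. The complement of a non-contractible simple closed curve in $\mathbb{RP}^2$ is a single open disk, so $q$ restricts to a double cover $R_1\sqcup R_2\to \mathbb{RP}^2\setminus\bar Q$ over a simply connected base; such a cover must be trivial, i.e. two disjoint copies of the disk interchanged by the deck transformation $a$. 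Hence $a(R_1)=R_2$, which is exactly what part (b) requires. (Alternatively, one can avoid $\mathbb{RP}^2$ by verifying directly, at a boundary point $p$ of $Q$ and its antipode $-p$, that $a$ reverses the local co-orientation of $Q$, so the two local sides at $p$ are exchanged with those at $-p$, again forcing $a(R_1)=R_2$.)
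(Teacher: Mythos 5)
Your proof of (a) is the same as the paper's: if $Q$ lay in a closed hemisphere $H$, central symmetry would force $Q=-Q\subseteq H\cap(-H)$, a spherical line, which is excluded (or handled by the convention of Remark \ref{planar_is_always_balanced}). For (b) the overall strategy also coincides --- use that the antipodal map is an area-preserving homeomorphism carrying $Q$ to itself and hence permuting the two complementary regions --- but you go a genuine step further. The paper asserts $-R_1=R_2$ ``since $R_1$ and $R_2$ are connected,'' which only shows that the antipodal map permutes the pair $\{R_1,R_2\}$ and does not by itself exclude the alternative $-R_1=R_1$, $-R_2=R_2$. You correctly isolate this as the real content of (b) and close it with a covering-space argument: $q(Q)\subset\mathbb{RP}^2$ is an embedded circle whose preimage $Q$ is connected, hence $q(Q)$ is non-contractible, its complement is a single disk, and the (necessarily trivial) double cover of that disk is $R_1\sqcup R_2$ with the deck transformation exchanging the two sheets, so $-R_1=R_2$. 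This is correct, as is your local co-orientation alternative; an even quicker way to rule out $-R_1=R_1$ is to note that a fixed-point-free involution of an open disk would produce a quotient surface of Euler characteristic $1/2$. In short, your argument is a strictly more complete version of the paper's, supplying exactly the topological input that the paper's one-line justification leaves implicit.
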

\begin{proof}
(a) We may assume $Q$ is not a spherical line. If $Q$ were not balanced, then it would be contained in a closed hemisphere $H$. Hence $-Q \subset -H$. Since $Q$ is centrally symmetric, $Q = -Q \subset -H$, which implies that $Q \subset H \cap -H$, i.e., $H$ is a spherical line, contrary to our assumption.

(b) Let $R_1$ and $R_2$ be the two (connected) regions of $\mathbb{S}^2$ determined by $Q$. Since both $\mathbb{S}^2$ and $Q$ are centrally symmetric and $R_1$ and $R_2$ are connected, we have that $-R_1 = R_2$ and $-R_2 = R_1$. Since the operation $-X$ on sets preserves area, the result follows.
\end{proof}

The following result is a discrete analog of a theorem by Möbius. Recall that the indices of the vertices are always taken modulo the number of vertices of the polygon.

\begin{theorem}
A simple, centrally symmetric spherical polygon $Q$ with at least $2n$ vertices ($2n \geq 6$) has at least 6 inflections.
\end{theorem}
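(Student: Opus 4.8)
The plan is to reduce the problem to the inflection count already furnished by Theorem~\ref{segre_theorem_discrete_spherical} and then to squeeze two extra inflections out of the central symmetry. First I would dispose of the degenerate case: if $Q$ lies in a spherical line it is a great circle, and by Remark~\ref{planar_is_always_balanced} all of its $2n\geq 6$ edges count as inflections, so from now on assume $Q$ is not a spherical line. Then Proposition~\ref{proposition_centrally_symmetric}(a) shows $Q$ is balanced, and since it is also simple, Theorem~\ref{segre_theorem_discrete_spherical} already guarantees at least $4$ inflections. The entire task is thus to upgrade $4$ to $6$.

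The key structural step is to exploit the antipodal map $a(x)=-x$. Since $-Q=Q$, the map $a$ permutes the $2n$ vertices and restricts to a homeomorphism of the topological circle $Q$; it is fixed-point-free, because no point of $\mathbb{S}^2$ equals its own antipode. A fixed-point-free homeomorphism of the circle must be orientation-preserving (an orientation-reversing one has degree $-1$, hence nonzero Lefschetz number and a fixed point), so $a$ acts on the cyclically ordered vertex set as a cyclic shift by some $k$; being an involution forces $2k\equiv 0\pmod{2n}$, hence $k=n$, since $k=0$ is excluded as it would fix vertices. Relabelling if necessary, I may therefore assume $u_{i+n}=-u_i$ for all $i$ (indices mod $2n$), where $n\geq 3$ because $2n\geq 6$. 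In particular central symmetry forces an even number of vertices, which justifies the notation $2n$.

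From this antipodal labelling the sign sequence becomes anti-periodic. Writing $\epsilon_i=[u_i,u_{i+1},u_{i+2}]$ and $s_i=\operatorname{sign}\epsilon_i$, trilinearity of the determinant gives $\epsilon_{i+n}=[-u_i,-u_{i+1},-u_{i+2}]=-\epsilon_i$, so $s_{i+n}=-s_i$. Consequently a sign change occurs at the transition $i\to i+1$ if and only if one occurs at $(i+n)\to(i+n+1)$, and since $0<n<2n$ the involution $i\mapsto i+n$ pairs the sign-change positions with no fixed point. Hence the total number of inflections equals $2m$, where $m$ is the number of sign changes among the transitions $s_1\to s_2,\dots,s_n\to s_{n+1}$. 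Because this finite run starts at $s_1$ and ends at $s_{n+1}=-s_1$, its endpoints carry opposite signs, which forces $m$ to be \emph{odd}. Finally, Theorem~\ref{segre_theorem_discrete_spherical} gives $2m\geq 4$, i.e. $m\geq 2$; combining this with the parity of $m$ yields $m\geq 3$, so $Q$ has $2m\geq 6$ inflections, as claimed.

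The sign-counting core (anti-periodicity together with the parity argument) is routine once the labelling is in place; I expect the main obstacle to be making the relabelling $u_{i+n}=-u_i$ fully rigorous. Specifically, one must justify that the antipodal map acts on the cyclic vertex sequence \emph{precisely} as the shift by $n$, which rests on the orientation analysis of $a$ as a circle homeomorphism and on the fact that $a$ is fixed-point-free on $Q$. A little care is also needed to confirm that $a$ genuinely permutes the vertices (corners map to corners under our standing no-three-collinear assumption) and that no vertex coincides with its own antipode.
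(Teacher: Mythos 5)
Your proof is correct, and its skeleton matches the paper's: dispose of the great-circle case via Remark \ref{planar_is_always_balanced}, obtain four inflections from Proposition \ref{proposition_centrally_symmetric}(a) together with Theorem \ref{segre_theorem_discrete_spherical}, and then upgrade four to six using the anti-periodicity $s_{i+n}=-s_i$ of the signs $\epsilon_i=[u_i,u_{i+1},u_{i+2}]$ (the paper's facts (i) and (ii)). Where you genuinely diverge is in how the upgrade is organized. The paper takes the four inflections already in hand, applies the shift $i\mapsto i+n$ to them, and runs a case analysis on how many of the four are antipodal images of one another (yielding $8$, $6$, or --- in the worst case of two antipodal pairs --- still $4$), and only in that last case invokes a parity argument along the arc from $u_i$ to $u_{i+n}$. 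You instead count globally: the fixed-point-free involution $i\mapsto i+n$ pairs up the sign-change positions, so the total number of inflections is $2m$ with $m$ the number of changes in one half-period, and $s_{n+1}=-s_1$ forces $m$ to be odd; then $2m\geq 4$ and $m$ odd give $2m\geq 6$ in one stroke. This is tidier and subsumes the paper's case analysis in a single parity computation. You also supply something the paper asserts without comment, namely that central symmetry of the set $Q$ can be realized as the index shift $u_{i+n}=-u_i$; your argument via the antipodal map being a fixed-point-free, hence orientation-preserving, homeomorphism of the topological circle $Q$ is a legitimate way to justify that relabelling, and is a small rigor gain over the paper.
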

\begin{proof}
We may assume that $Q$ is not a spherical line. By Proposition \ref{proposition_centrally_symmetric} (a) and Theorem \ref{segre_theorem_discrete_spherical} (or also by Proposition \ref{proposition_centrally_symmetric} (b) and Theorem \ref{discrete_tennis_ball_theorem}), $Q$ has at least 4 inflections. Recall that, in terms of determinants, a pair $\{u_i,u_{i+1}\}$ is an inflection if and only if the determinants $[i-1, i, i+1]$ and $[i, i+1, i+2]$ have opposite signs. From this the following facts follow:

(i) Since $Q$ is centrally symmetric (hence $u_{i+n} = -u_i$), the pair $\{u_i,u_{i+1}\}$ is an inflection if and only if $\{u_{i+n},u_{i+n+1}\}$ is an inflection, because in both cases there will be a sign change of determinants.

(ii) Moreover, if the sign change in $\{u_i,u_{i+1}\}$ was from negative to positive (resp. from positive to negative), then the sign change in $\{u_{i+n},u_{i+n+1}\}$ will be from positive to negative (resp. from negative to positive), by the same reason in (i).

If the inflections already obtained are 
$$ \{u_i,u_{i+1}\}, \{u_j,u_{j+1}\}, \{u_k,u_{k+1}\} \text{ and } \{u_l,u_{l+1}\},$$
then by fact (i) the pairs
$$ \{u_{i+n},u_{i+n+1}\}, \{u_{j+n},u_{j+n+1}\}, \{u_{k+n},u_{k+n+1}\} \text{ and } \{u_{l+n},u_{l+n+1}\}$$
are also inflections. There might be some repetitions if some of the first 4 inflections are symmetric to each other. If that does not happen, then we obtain in total 8 inflections. If there is only one pair of symmetric inflections among these first ones, then we obtain in total 6 inflections. Finally, if there are two pairs of symmetric inflections among the first 4 ones, then we still have only 4 inflections. In this case, we label these inflections simply as
$$ \{u_i,u_{i+1}\}, \{u_j,u_{j+1}\}, \{u_{i+n},u_{i+n+1}\} \text{ and } \{u_{j+n},u_{j+n+1}\},$$
with $i<j<i+n<j+n$. We may assume, without loss of generality, that in inflection $\{u_i,u_{i+1}\}$ the sign change went from positive to negative. Consequently, the sign change in $\{u_{i+n},u_{i+n+1}\}$ goes from negative to positive (by fact (ii)). Since inflection $\{u_j,u_{j+1}\}$ happens between them (hence, changing the sign), there must be an odd extra number of inflections between $\{u_i,u_{i+1}\}$ and $\{u_{i+n},u_{i+n+1}\}$ in order to compensate for the change. In particular, there is at least one other inflection $\{u_k,u_{k+1}\}$, with $i < k < i+n$ and $k \neq j$. By fact (i) again, edge $\{u_{k+n},u_{k+n+1}\}$ is also an inflection (a new one). We have, thus, proved that also in this case $Q$ has at least 6 inflections.
\end{proof}

\begin{corollary}
    A a space polygon $P$ with at least $2n$ vertices ($2n \geq 6$) and whose tangent indicatrix is simple and centrally symmetric must have at least 6 flattenings.
\end{corollary}

\section*{Acknowledgments}

The first author has been supported by CAPES postgraduate grants. The second author wants to thank CNPq and CAPES (Finance Code 001) for financial support during the preparation of this manuscript.

\printbibliography

\end{document}